\DeclareMathOperator{\oinv}{oinv}
\DeclareMathOperator{\onsp}{onsp}
\DeclareMathOperator{\negg}{neg}
\DeclareMathOperator{\oneg}{oneg}
\DeclareMathOperator{\inv}{inv}
\DeclareMathOperator{\nsp}{nsp}
\theoremstyle{plain}
\newtheorem{thm}{Theorem}[section]
\newtheorem{pro}[thm]{Proposition}
\newtheorem{lem}[thm]{Lemma}
\newtheorem{con}[thm]{Conjecture}
\newtheorem{cor}[thm]{Corollary}
\theoremstyle{definition}
\newtheorem{defn}[thm]{Definition}
\theoremstyle{remark}
\newcommand{\mc}{{\footnotesize\left[ \begin{array}{c}  m \\
\left\lfloor \frac{|I_{1}|+1}{2}\right\rfloor,\ldots,\left\lfloor \frac{|I_{s}|+1}{2}\right\rfloor \end{array} \right]_{x^2}}}
\newcommand{\mcb}{\footnotesize{\left[ \begin{array}{c} m \\
\left\lfloor \frac{|J_{1}|+1}{2}\right\rfloor,\ldots,\left\lfloor \frac{|J_{s}|+1}{2}\right\rfloor\end{array} \right]_{x^2}}}
\newcommand{\p}{\noindent}
\newcommand{\qb}{B_n^{J}}
\newcommand{\sumb}{\sum_{\sigma \in {\qb}} (-1)^{\ell_B(\sigma)} x^{L_B(\sigma)}}
\newcommand{\N}{\mathbb N}
\newcommand{\f}{(-1)^{\ell(\sigma)}x^{L(\sigma)}}
\newcommand{\eqdef}{:=}
\begin{document}

\begin{center}

{\Large \bf Odd length for even hyperoctahedral groups and\\
 signed generating functions \footnote{2010 Mathematics Subject
Classification: Primary 05A15; Secondary 05E15, 20F55.}}
 \vspace{0.8cm} 

 Francesco Brenti \\

Dipartimento di Matematica \\

Universit\`{a} di Roma ``Tor Vergata''\\

Via della Ricerca Scientifica, 1 \\

00133 Roma, Italy \\

{\em brenti@mat.uniroma2.it } \\

 \vspace{0.5cm}

 Angela Carnevale \footnote{Partially supported by German-Israeli Foundation for Scientific Research and Development, grant no. 1246.} \\

Fakultat f\"ur Mathematik \\

Universit\"at Bielefeld\\

D-33501 Bielefeld, Germany \\

{\em acarneva1@math.uni-bielefeld.de } \\
\end{center}

\vspace{1cm}

\begin{abstract}
We define a new statistic on the even hyperoctahedral
groups which is a natural analogue of the odd length statistic recently
defined and studied on Coxeter groups of types $A$ and $B$. We compute  the signed  (by length)
generating  function of this statistic over the whole group and over 
its maximal and some other quotients and show that it always factors nicely.
We also present some conjectures.
\end{abstract}

\section{Introduction}
The signed (by length) enumeration of the symmetric group, and other finite Coxeter groups by
various statistics is an active area of research (see, e.g., \cite{AGR, Bia, BC, Cas, DF, L, Man, Mon, Rei, Siv, Wa}). For example, the
signed enumeration of classical Weyl groups by major index was carried out by 
Gessel-Simion in \cite{Wa} (type $A$), by Adin-Gessel-Roichman in \cite{AGR} (type $B$)
and by Biagioli in \cite{Bia} (type $D$), that by descent by Desarmenian-Foata in \cite{DF}
(type $A$) and by Reiner in \cite{Rei} (types $B$ and $D$), while that by excedance by Mantaci
in \cite{Man} and independently by Sivasubramanian in \cite{Siv} (type $A$) and by Mongelli in \cite{Mon}
(other types).

In \cite{KV}, \cite{VS} and \cite{VS2} two statistics were introduced on the symmetric and hyperoctahedral
groups, in connection with the enumeration of partial flags in a quadratic space and the
study of local factors of representation zeta functions of certain groups, respectively
(see \cite{KV} and \cite{VS2}, for details). These statistics combine combinatorial and
parity conditions and have been called the ``odd length'' of the respective groups.
In \cite{KV} and \cite{VS2} it was conjectured that the signed (by length) generating functions
of these statistics over all the quotients of the corresponding groups always factor in a 
very nice way, and this was proved in \cite{BC} (see also \cite{CarT}) for types $A$ and $B$
and independently, and in a different way, in \cite{L} for type $B$.

In this paper we define a natural analogue of these statistics for the even hyperoctahedral
group and study the corresponding signed generating functions. More precisely, we show that
certain general properties that these signed generating functions have in types $A$ and $B$
(namely ``shifting'' and ``compressing'') continue to hold in type $D$. We then show that these 
generating functions factor nicely for the whole group (i.e., for the trivial quotient) and
for the maximal quotients. As a consequence of our results we
show that the signed generating function over the whole even hyperoctahedral group is
the square of the one for the symmetric group.

The organization of the paper is as follows. In the next section we recall some definitions, notation, and results that are used in the sequel. In \S 3 we define a new statistic on the even
hyperoctahedral group which is a natural analogue of the odd length statistics that have already been defined in types $A$ and $B$ in \cite{KV} and \cite{VS2}, and study 
some general properties of the corresponding signed generating functions. These 
include a complementation property, the identification of subsets of the quotients over
which the corresponding signed generating function always vanishes, and operations on a quotient that leave the corresponding signed generating function unchanged. In \S 4 we 
show that the signed generating function over the whole even hyperoctahedral group factors
nicely. As a consequence of this result we obtain that this signed generating function is the
square of the corresponding one in type $A$. In \S 5 we compute the signed generating functions of the maximal, and some other, quotients and show that these also always factor nicely. Finally, in \S 6, we present some conjectures naturally arising from
the present work, and the evidence that we have in their favor.

\section{Preliminaries}
In this section we recall some notation, definitions, and results that are used in the sequel.

\p

We let $\mathbb P:=\{1,\,2,\ldots\}$ be the set of positive integers and $\N:= \mathbb{P} \cup \{0\}$. For all $m,\,n \in \mathbb{Z}$, $m\leq n$ we let $[m,n]:= \{m,\,m+1,\ldots,\,n\}$ and $[n]:=[1,\,n]$. Given a set $I$ we denote by $|I|$ its cardinality. 
For a real number $x$ we denote by $\left\lfloor x \right\rfloor$ the greatest integer less than or equal to $x$ and by $\left\lceil x \right\rceil$ the smallest integer greater than or equal to $x$. Given $J \subseteq [0,n-1]$ there are unique integers $a_1 < \cdots < a_s$ and  $b_1 < \cdots < b_s$ such that $J = [a_1,b_1] \cup \cdots \cup [a_s,b_s]$ and  $a_{i+1} - b_{i} >1$ for $i=1, \ldots , s-1$. We call the intervals  $[a_1,b_1], \ldots , [a_s,b_s]$ the {\em connected components} of $J$.

\p

For $n_1,\ldots,\,n_k \in \N$ and $n:=\sum_{i=1}^k n_i $,  we let $\footnotesize \left[ \begin{array}{c} n \\
n_1,\ldots,n_k \end{array} \right]_{q}$  \normalsize denote the {\em $q$-multinomial coefficient}
\[\left[ \begin{array}{c} n \\
n_1,\ldots,n_k \end{array} \right]_{q}:= \frac{[n]_{q}!}{[n_1]_q !\cdot \ldots\cdot [n_k]_q !},\]
where
\[[n]_q := \frac{1-q^n}{1-q},\qquad\qquad [n]_q ! := \prod_{i=1}^{n} [i]_q \qquad\qquad\mbox{and}\qquad [0]_q!:= 1.
\]

The symmetric group $S_n$ is the group of permutations of the set $[n]$. For $\sigma \in S_n$ we use both the one-line notation $\sigma=[\sigma(1),\,\ldots,\,\sigma(n)]$ and the disjoint cycle notation.
We let $s_1,\ldots,\,s_{n-1}$ denote the standard generators of $S_n$, $s_i=(i,\,i+1)$.

The hyperoctahedral group $B_n$ is the group of signed permutations, or permutations $\sigma$ of the set $[-n,n]$ such that $\sigma(j)=-\sigma(-j)$. 
For a signed permutation $\sigma$ we use the window notation $\sigma = [\sigma(1), \ldots ,\sigma(n)]$ and the disjoint cycle notation.
The standard generating set of $B_n$ is $S=\{s_0,\,s_1,\,\ldots,\,s_{n-1}\}$, where $s_0=[-1,\,2,\,3,\ldots,\,n]$ and $s_1,\ldots,\,s_{n-1}$ are as above. 

We follow $\cite{BB}$ for notation and terminology about Coxeter groups. In particular, for $(W,S)$ a Coxeter system we let $\ell$ be the Coxeter length and for 
$I\subseteq S$ we define the quotients:
\[
W^{I} := \{w\in W \;:\;  D(w)\subseteq S\setminus I\},
\]
and 
\[
^{I}W := \{w\in W \;:\; D_L(w)\subseteq S\setminus I\},
\]
where $D(w)=\{s\in S \;:\; \ell(ws)<\ell(w)\}$, and 
$D_L(w)=\{s\in S \;:\; \ell(sw)<\ell(w)\}$. The parabolic subgroup $W_I$
is the subgroup generated by $I$.  The following result is well known (see, e.g., \cite[Proposition 2.4.4]{BB}).

\begin{pro}
Let $(W,S)$ be a Coxeter system, $J \subseteq S$, and $w \in W$. Then there exist
unique elements $w^J \in W^J$ and $w_J \in W_J$ (resp., $^Jw \in ^JW$ and 
$_Jw \in W_J$) such that $w= w^J w_J$ (resp., $_Jw ^Jw$). Furthermore
$\ell(w)= \ell(w^J)+\ell(w_J)$ (resp., $\ell(_Jw)+\ell(^Jw)$).
\end{pro}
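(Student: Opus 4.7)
I treat the right-coset statement; the left-coset statement follows by applying it to $w^{-1}$, since the map $w \mapsto w^{-1}$ bijects $W^J$ with $^JW$, preserves $W_J$ setwise, and preserves $\ell$.

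The plan is to let $w^J$ be an element of minimum length in the right coset $wW_J$ and set $w_J := (w^J)^{-1}w \in W_J$, then verify three things: (i) $w^J \in W^J$, (ii) $\ell(w) = \ell(w^J)+\ell(w_J)$, and (iii) uniqueness of the factorisation. Part (i) is short: for each $s \in J$ the element $w^J s$ lies in the same coset $wW_J$, so $\ell(w^J s) \ge \ell(w^J)$ by minimality, and since $\ell$ changes by $\pm 1$ under right multiplication by a simple reflection, $\ell(w^J s) = \ell(w^J)+1$, i.e.\ $s \notin D(w^J)$.

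The technical heart is the following lemma, from which (ii) and (iii) will follow formally: \emph{if $x$ is a minimum-length representative of $xW_J$ and $y \in W_J$, then $\ell(xy) = \ell(x)+\ell(y)$.} I would argue by induction on $\ell(y)$. Writing $y = y's$ with $s \in J$ and $\ell(y') = \ell(y)-1$, the inductive hypothesis gives $\ell(xy') = \ell(x)+\ell(y')$. If the desired equality failed, then $\ell(xy's) < \ell(xy')$, so the Exchange Condition, applied to a reduced word for $xy'$ obtained by concatenating reduced words for $x$ and $y'$, yields a reduced word for $xy = xy's$ with one simple generator removed. If the removed generator lies in the $y'$-part, one gets a word of length $\ell(y')-1$ for $y's = y$, contradicting $\ell(y) = \ell(y')+1$. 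If it lies in the $x$-part, one produces $x' \in W$ with $\ell(x') < \ell(x)$ and $x' = x \cdot y's(y')^{-1} \in xW_J$ (since $y's(y')^{-1} \in W_J$), contradicting the minimality of $x$ in its coset.

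Granted the lemma, (ii) applied to $(w^J, w_J)$ is immediate. For (iii), suppose $w = u_1 v_1 = u_2 v_2$ with both $u_i$ minimum-length representatives and $v_i \in W_J$, and set $z := u_1^{-1}u_2 \in W_J$; the lemma applied to $u_2 = u_1 z$ and to $u_1 = u_2 z^{-1}$ gives $\ell(u_2) = \ell(u_1)+\ell(z)$ and $\ell(u_1) = \ell(u_2)+\ell(z)$, forcing $\ell(z) = 0$ and hence $u_1 = u_2$, $v_1 = v_2$. A short final step identifies $W^J$ with the set of minimum-length coset representatives: the inclusion $\supseteq$ is (i), and for $\subseteq$, writing $w = xy$ with $x$ minimum-length and $y \in W_J$, the lemma gives $\ell(ws) = \ell(x)+\ell(ys)$ and $\ell(w) = \ell(x)+\ell(y)$, so $w \in W^J$ forces $\ell(ys) > \ell(y)$ for all $s \in J$, and hence $y = e$. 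The main obstacle is the length-drop exclusion inside the lemma, where one genuinely invokes Coxeter combinatorics (the Exchange Condition); everything else is bookkeeping.
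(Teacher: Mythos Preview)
Your argument is correct and is essentially the standard proof (via the Exchange Condition) of this parabolic decomposition result. The paper itself does not prove the proposition: it states it as well known and cites \cite[Proposition 2.4.4]{BB}, so there is no in-paper proof to compare against; your write-up is in fact close to the argument one finds in that reference.
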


It is well known that $S_n$ and $B_n$, with respect to the above generating 
sets, are Coxeter systems and that the following results hold (see, e.g., 
\cite[Propositions 1.5.2, 1.5.3, and \S 8.1]{BB}).
\begin{pro}
Let $\sigma \in S_n$. Then
$
\ell_A(\sigma)=| \{ (i,j) \in [n]^2 : i<j , \sigma(i) > \sigma(j) \} |
$
and
$
D(\sigma) = \{ s_i :  \sigma(i) > \sigma(i+1) \}.
$
\end{pro}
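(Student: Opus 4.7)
The plan is to establish both assertions via a careful analysis of how right multiplication by a standard generator $s_i$ affects the inversion count
\[
\inv(\sigma):=|\{(p,q)\in[n]^2:p<q,\,\sigma(p)>\sigma(q)\}|.
\]
The key lemma I would first prove is that for every $\sigma\in S_n$ and every $i\in[n-1]$, one has $\inv(\sigma s_i)=\inv(\sigma)+1$ if $\sigma(i)<\sigma(i+1)$ and $\inv(\sigma s_i)=\inv(\sigma)-1$ if $\sigma(i)>\sigma(i+1)$. The verification is direct: since $\sigma s_i$ differs from $\sigma$ only by swapping the values in positions $i$ and $i+1$ of the one-line notation, any pair $(p,q)$ with $\{p,q\}\cap\{i,i+1\}=\emptyset$ contributes the same to $\inv$ for both permutations; and for $j\notin\{i,i+1\}$ the joint contribution of the two pairs involving $j$ together with $i$ or $i+1$ depends only on $\sigma(j)$ and the unordered pair $\{\sigma(i),\sigma(i+1)\}$, hence is unchanged. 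Only the pair $(i,i+1)$ itself flips its inversion status, yielding the claimed $\pm 1$ change.

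From this lemma the equality $\ell_A(\sigma)=\inv(\sigma)$ would follow by two opposite inequalities. First, taking any reduced expression $\sigma=s_{i_1}\cdots s_{i_k}$ with $k=\ell_A(\sigma)$ and iterating the lemma starting from $\inv(e)=0$, each factor changes $\inv$ by exactly $1$ in absolute value, so $\inv(\sigma)\leq k=\ell_A(\sigma)$. Conversely, if $\sigma\neq e$ then $\sigma$ is not strictly increasing on $[n]$, so some index $i$ satisfies $\sigma(i)>\sigma(i+1)$; the lemma gives $\inv(\sigma s_i)=\inv(\sigma)-1$, and induction on $\inv(\sigma)$ produces an expression for $\sigma$ of length $\inv(\sigma)$, forcing $\ell_A(\sigma)\leq\inv(\sigma)$.

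The description of $D(\sigma)$ is then an immediate consequence: by definition $s_i\in D(\sigma)$ iff $\ell_A(\sigma s_i)<\ell_A(\sigma)$, which by the equality just established is equivalent to $\inv(\sigma s_i)<\inv(\sigma)$, and this in turn is equivalent to $\sigma(i)>\sigma(i+1)$ by the key lemma.

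I do not foresee any serious obstacle: the whole proposition essentially reduces to the elementary combinatorial observation that $\sigma\mapsto\sigma s_i$ flips the inversion status of exactly one pair, namely $(i,i+1)$. The only mildly delicate piece is the bubble-sort induction, which rests on the simple fact that any non-identity permutation has at least one descent in its one-line notation.
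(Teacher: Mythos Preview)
Your argument is correct and is essentially the standard proof of this classical fact. Note, however, that the paper does not give its own proof of this proposition: it is stated as a well-known result with a reference to \cite[Propositions 1.5.2, 1.5.3]{BB}, so there is nothing in the paper to compare your proof against. The approach you outline (showing that $\inv(\sigma s_i)=\inv(\sigma)\pm 1$ and combining this with a bubble-sort induction) is exactly the argument one finds in such references.
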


For $\sigma \in B_n$ let
\begin{align*}\p \ \inv(\sigma):=& |\{(i,j)\in [n]\times[n] \; : \; i<j,\,\sigma(i)>\sigma(j)\}|, \\ \negg(\sigma):=& |\{ i\in [n]\; : \; \sigma(i)<0\}|, \\ \nsp(\sigma):=& |\{(i,j)\in [n]\times[n] \; : \; i<j, \, \sigma(i)+\sigma(j)<0\}|.\end{align*}
\begin{pro}
Let $\sigma \in B_n$. Then
\[
\ell_B(\sigma)= \frac{1}{2} | \{ (i,j) \in [-n,n]^2 : i<j , \sigma(i) > \sigma(j) \} |=\ \inv(\sigma)+\negg(\sigma)+\nsp(\sigma)
\]
and
$
D(\sigma) = \{ s_i : i \in [0,n-1] , \sigma(i) > \sigma(i+1) \}.
$
\end{pro}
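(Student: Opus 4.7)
The plan is to address the two halves of the proposition separately, each by standard combinatorial arguments. For the descent-set characterization, I would compute $\ell_B(\sigma s_i)$ for each simple generator: right multiplication by $s_i$ with $i\ge 1$ swaps the entries in positions $i$ and $i+1$ of the window, whereas $s_0$ negates $\sigma(1)$. An easy case analysis on the signs of the affected entries shows in each case that $\ell_B(\sigma s_i)<\ell_B(\sigma)$ exactly when $\sigma(i)>\sigma(i+1)$, using the convention $\sigma(0):=0$ to handle $i=0$ uniformly.

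For the formula $\ell_B(\sigma)=\inv(\sigma)+\negg(\sigma)+\nsp(\sigma)$, I would appeal to the standard fact that $\ell_B(\sigma)$ equals the number of positive roots of $B_n$ sent by $\sigma$ to negative roots. The positive roots split into three families: the short roots $e_k$ for $k\in[n]$ and the long roots $e_j-e_i$ and $e_j+e_i$ for $1\le i<j\le n$. Writing $\sigma(e_k)=\varepsilon_k e_{|\sigma(k)|}$ with $\varepsilon_k$ the sign of $\sigma(k)$, a case analysis on the signs of $\sigma(i)$ and $\sigma(j)$ shows that $e_k$ is sent to a negative root iff $\sigma(k)<0$, that $e_j-e_i$ is iff $\sigma(i)>\sigma(j)$, and that $e_j+e_i$ is iff $\sigma(i)+\sigma(j)<0$. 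Summing the three contributions yields the equality with $\inv+\negg+\nsp$.

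The remaining equality, relating $\ell_B(\sigma)$ to a count of pairs in $[-n,n]^2$, then follows from the antisymmetry $\sigma(-k)=-\sigma(k)$: the involution $(i,j)\mapsto(-j,-i)$ preserves the inversion condition, and partitioning the inversion pairs by the signs of their entries and matching the resulting blocks (via the orbits of this involution) with $\inv$, $\negg$, and $\nsp$ produces the identity, with the factor $\frac{1}{2}$ accounting for the orbit size.

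The main obstacle is the sign bookkeeping for the long roots $e_j\pm e_i$: all four sign combinations of $(\sigma(i),\sigma(j))$ must be checked to see that they collapse to the single conditions $\sigma(i)>\sigma(j)$ and $\sigma(i)+\sigma(j)<0$ respectively. This is elementary but it is the step where one could most easily lose a sign.
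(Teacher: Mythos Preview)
The paper does not give its own proof of this proposition: it is stated in the preliminaries and attributed to \cite[\S 8.1]{BB}. There is therefore nothing to compare against, and your outline for the two substantive claims --- the descent-set description via computing $\ell_B(\sigma s_i)$, and the identity $\ell_B=\inv+\negg+\nsp$ via counting positive roots sent to negatives --- is the standard textbook argument and is correct.

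One genuine gap, however, concerns the middle displayed equality. Your proposed involution $(i,j)\mapsto(-j,-i)$ on the inversion set is not fixed-point-free: the pairs $(-j,j)$ with $j>0$ are fixed, and such a pair is an inversion exactly when $\sigma(j)<0$. So orbit-counting does not produce a clean factor $\tfrac{1}{2}$. In fact, partitioning the inversion pairs by the signs of $i,j$ (with $\sigma(0)=0$) gives
\[
\bigl|\{(i,j)\in[-n,n]^2:\ i<j,\ \sigma(i)>\sigma(j)\}\bigr|
=2\,\inv(\sigma)+3\,\negg(\sigma)+2\,\nsp(\sigma),
\]
as one checks for instance on $\sigma=[-1,-2]\in B_2$, where the left side is $10$ while $2\ell_B(\sigma)=8$. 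Thus the $\tfrac{1}{2}$ expression, read literally with $0\in[-n,n]$, does not equal $\ell_B$; it is the identity $\ell_B=\inv+\negg+\nsp$ that is actually used throughout the paper, and that is what your root-system argument establishes.
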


The group $D_n$ of even-signed permutations is the subgroup of $B_n$ of elements with an even number of negative entries in the window notation:
\[D_n=\{\sigma \in B_n\,:\, \negg(\sigma)\equiv 0 \pmod 2\}.\]
This is a Coxeter group of type $D_{n}$, with set of generators $S=\{s_0 ^D,s^D _1,\ldots ,s^D _{n-1}\}$, where $s_0 ^D:=[-2,-1,3,\ldots n]$ and  $ s^D_{i}:=s_i$ for $i\in [n-1]$.
 Moreover, the following holds (see, e.g., \cite[Propositions 8.2.1 and 8.2.3]{BB}).
\begin{pro}
\label{combD}
Let $\sigma \in D_n$. Then
$
\ell_D(\sigma)=\inv(\sigma)+\nsp(\sigma)
$
and
$
D(\sigma) = \{ s^D_i : i \in [0,n-1] , \sigma(i) > \sigma(i+1) \},
$
where
$\sigma(0):=\sigma(-2)$.
\end{pro}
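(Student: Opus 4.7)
The plan is to use the explicit root-system realization of $D_n$. Fix the positive roots
\[
\Phi^+ = \{e_j - e_i : 1 \leq i < j \leq n\} \cup \{e_i + e_j : 1 \leq i < j \leq n\},
\]
with simple system $\alpha_0 = e_1 + e_2$ (for $s_0^D$) and $\alpha_i = e_{i+1} - e_i$ for $i = 1,\ldots,n-1$, and extend the action of $\sigma \in D_n$ to $\mathbb{R}^n$ by $\sigma(e_k) = \operatorname{sgn}(\sigma(k))\, e_{|\sigma(k)|}$. By the general Coxeter-theoretic identities (cf.\ \cite{BB}),
\[
\ell_D(\sigma) = |\{\alpha \in \Phi^+ : \sigma(\alpha) \in \Phi^-\}|, \qquad s \in D(\sigma) \iff \sigma(\alpha_s) \in \Phi^-.
\]

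I would handle the two families of positive roots separately. For $\alpha = e_j - e_i$ with $i < j$, a short sign-pattern case analysis on $(\sigma(i), \sigma(j))$ shows uniformly in all four sign combinations that $\sigma(\alpha) \in \Phi^-$ iff $\sigma(i) > \sigma(j)$ as integers; summing over $i<j$ contributes $\inv(\sigma)$. For $\alpha = e_i + e_j$ with $i < j$, the analogous four-case analysis shows that $\sigma(\alpha) \in \Phi^-$ iff $\sigma(i) + \sigma(j) < 0$; summing contributes $\nsp(\sigma)$. Adding the two yields $\ell_D(\sigma) = \inv(\sigma) + \nsp(\sigma)$.

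For the descent set, apply the simple-root criterion directly. For $i \geq 1$, $s_i^D \in D(\sigma)$ iff $\sigma(e_{i+1} - e_i) \in \Phi^-$, i.e.\ (by the first case analysis) $\sigma(i) > \sigma(i+1)$. For $i = 0$, $s_0^D \in D(\sigma)$ iff $\sigma(e_1 + e_2) \in \Phi^-$, i.e.\ $\sigma(1) + \sigma(2) < 0$; rewriting this as $-\sigma(2) > \sigma(1)$ and invoking the convention $\sigma(0) := \sigma(-2) = -\sigma(2)$ puts the criterion in the uniform form $\sigma(0) > \sigma(1)$, matching the statement. The only non-routine point, and the main (but modest) obstacle, is the sign-pattern case analysis: one must verify that the four possibilities for $(\operatorname{sgn}\sigma(i), \operatorname{sgn}\sigma(j))$ all collapse to a single clean comparison, which ultimately reflects the $W$-invariance of the root system under the double-sign-flip symmetry preserved by $D_n$.
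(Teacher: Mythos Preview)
Your argument is correct. Note, however, that the paper does not actually give a proof of this proposition: it is stated as a known fact with the citation ``see, e.g., \cite[Propositions 8.2.1 and 8.2.3]{BB}'' and no further justification. Your root-system computation (counting which positive roots $e_j - e_i$ and $e_i + e_j$ are sent to $\Phi^-$, then specializing to the simple roots for the descent set) is the standard way to establish such combinatorial length and descent formulas for classical Weyl groups, and it is essentially the argument one finds in Bj\"orner--Brenti. So there is nothing to compare: you have supplied a clean proof where the paper simply invokes the reference.
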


Thus, for a subset of the generators $I\subseteq S$, that we identify with the corresponding subset $I \subseteq [0,n-1]$, we have the following description of the quotient
$$D_n^I=\{\sigma \in D_n \,:\, \sigma(i)<\sigma(i+1) \mbox{ for all } i \in I\}$$
where $\sigma(0):=-\sigma(2)$.

The following statistic was first defined in \cite{KV}. Our definition is not the 
original one, but is equivalent to it (see \cite[Definition 5.1 and Lemma 5.2]{KV})
and is the one that is best suited for our purposes.
\begin{defn}
Let $n\in {\mathbb P}$. The statistic $L_{A}:S_n \rightarrow \N$ is defined as follows. For $\sigma \in S_n$ 
\[
L_{A}(\sigma)\eqdef |\{(i,j) \in [n]^2 \; : \; i<j,\,\sigma(i)>\sigma(j),\,i\not\equiv j \pmod{2}\}|.
\]
\end{defn}

The following statistic was introduced in \cite{VS} and \cite{VS2}, and is a natural
analogue of the statistic $L_A$ introduced above, for Coxeter groups of type $B$.
\begin{defn}
\label{defLB}
Let $n\in {\mathbb P}$. The statistic $L_{B}:B_n \rightarrow \N$ is defined as follows. For $\sigma \in B_n$ 
\[
L_{B}(\sigma):= \frac{1}{2} |\{(i,j)\in [-n,\,n]^2 \, : \, i<j,\,\sigma(i)>\sigma(j),\,i\not\equiv j \pmod{2}\}|.\]
\end{defn}
\noindent For example, if $n=4$ and $\tau=[-2,4,3,-1]$ then $L_B (\tau)= \frac{1}{2}|\{(-4,-3),\,(-4,1),\,(-3,-2),$ $\,(-1,0),\,(-1,4),\,(0,1),\,(2,3),\,(3,4)\}|=4$. 

We call these statistics $L_A$ and $L_B$ the {\em odd length} of the symmetric and hyperoctahedral groups, respectively.
Note that if $\sigma \in S_n \subset B_n $ then $ L_B (\sigma)=L_A(\sigma)$. 

The odd length of an element $\sigma \in B_n$ also has a description in terms of statistics of the window notation of $\sigma$.
Given $\sigma \in B_n$ we let
\begin{align*}\p \ \oinv(\sigma):=& |\{(i,j)\in [n]^2 \; : \; i<j,\,\sigma(i)>\sigma(j),\,i\not\equiv j \pmod{2}\}|, \\ \oneg(\sigma):=& |\{ i\in [n]\; : \; \sigma(i)<0,\,i\not\equiv 0 \pmod{2}\}|, \\ \onsp(\sigma):=& |\{(i,j)\in [n]^2 \; : \; \sigma(i)+\sigma(j)<0,\,i\not\equiv j \pmod{2}\}|.\end{align*}
The following result appears in \cite[Proposition 5.1]{BC}.
\begin{pro}\label{LB}
Let $\sigma \in B_n$. Then 
$
L_B(\sigma)= \oinv(\sigma)+ \oneg(\sigma)+ \onsp(\sigma).$
\end{pro}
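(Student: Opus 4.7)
The plan is to imitate the classical proof of the decomposition $\ell_B(\sigma) = \inv(\sigma) + \negg(\sigma) + \nsp(\sigma)$, but with the parity restriction $i \not\equiv j \pmod 2$ carried throughout. Setting
\[
\mathcal{P} := \{(i,j) \in [-n,n]^2 : i < j,\ \sigma(i) > \sigma(j),\ i \not\equiv j \pmod 2\},
\]
so that $L_B(\sigma) = \frac{1}{2}|\mathcal{P}|$ by Definition \ref{defLB}, I would partition $\mathcal{P}$ into five disjoint pieces according to the signs of $i$ and $j$: $\mathcal{P}_1$ with $0 < i < j$, $\mathcal{P}_2$ with $i < j < 0$, $\mathcal{P}_3$ with $i < 0 < j$, $\mathcal{P}_4$ with $j = 0$, and $\mathcal{P}_5$ with $i = 0$ (noting that $\sigma(0) = 0$ since $\sigma(0) = -\sigma(0)$).

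Next I would identify each piece with a familiar odd statistic. By definition, $|\mathcal{P}_1| = \oinv(\sigma)$. For $\mathcal{P}_2$ the map $(i,j) \mapsto (-j,-i)$ is a bijection onto $\mathcal{P}_1$: it preserves the parity condition (since $-k \equiv k \pmod 2$) and, using $\sigma(-k) = -\sigma(k)$, transforms $\sigma(i) > \sigma(j)$ into $\sigma(-j) > \sigma(-i)$; hence $|\mathcal{P}_2| = \oinv(\sigma)$. For $\mathcal{P}_5$ the pairs are $(0,j)$ with $j$ odd and $\sigma(j) < 0$, so $|\mathcal{P}_5| = \oneg(\sigma)$; an analogous check gives $|\mathcal{P}_4| = \oneg(\sigma)$.

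The slightly more delicate case is $\mathcal{P}_3$. Writing $i = -k$ with $k \in [n]$, the defining conditions become $k,j \in [n]$, $k \not\equiv j \pmod 2$, and $\sigma(k) + \sigma(j) < 0$. The latter two conditions are both symmetric under swapping $k$ and $j$, while the parity condition forces $k \ne j$, so the ordered pairs outnumber the corresponding subset with $k < j$ by exactly a factor of two, giving $|\mathcal{P}_3| = 2\,\onsp(\sigma)$. Summing then yields $|\mathcal{P}| = 2\,\oinv(\sigma) + 2\,\oneg(\sigma) + 2\,\onsp(\sigma)$, and dividing by $2$ gives the claim. The main obstacle, such as it is, is the parity bookkeeping: in particular, one must verify that the restriction $i \not\equiv j \pmod 2$ is invariant under the involution $k \leftrightarrow -k$ and that the degenerate pairs involving $0$ are handled correctly; once these are in place, the argument is a direct transcription of the classical five-case proof to the ``odd'' setting.
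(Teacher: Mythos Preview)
Your argument is correct and is precisely the natural proof: the five-piece partition of $\mathcal{P}$ and the involution $(i,j)\mapsto(-j,-i)$ do exactly what is needed. The paper itself does not prove this proposition---it merely quotes it from \cite{BC}, so there is nothing to compare against. One small caveat worth recording: the paper's displayed definition of $\onsp(\sigma)$ omits the constraint $i<j$ (which \emph{is} present in the definition of $\nsp(\sigma)$); your computation $|\mathcal{P}_3|=2\,\onsp(\sigma)$ implicitly uses the version with $i<j$, and that is indeed the intended definition---without it the identity fails already for $\sigma=[-2,-1]\in B_2$.
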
   
The signed generating function of the odd length factors very nicely both on quotients of $S_n$ and of $B_n$. The following result was conjectured in \cite[Conjecture C]{KV} and
proved in \cite{BC}.
\begin{thm}
\label{Aquot}
Let $n \in {\mathbb P}$, $I \subseteq [n-1]$, and $I_{1}, \ldots , I_{s}$ be the connected components of $I$. Then
 \begin{align}
 \sum_{\sigma \in S_{n}^{I}} (-1)^{\ell_A (\sigma )} x^{L_A(\sigma )} &=\mc \prod_{k=2m+2}^n \left(1+(-1)^{k-1}x^{\left\lfloor\frac{k}{2}\right\rfloor}\right)
 \end{align}
where $m := \sum_{k=1}^{s} \left\lfloor \frac{|I_{k}|+1}{2} \right\rfloor $.
\end{thm}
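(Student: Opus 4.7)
The plan is to proceed by induction on $n$, organizing the sum by the position of the maximum entry $n$ in $\sigma \in S_n^I$. Since $\sigma(i) < \sigma(i+1)$ for $i \in I$, the entry $n$ must sit at some position $p \notin I$; deleting $n$ yields a bijection between the corresponding subset of $S_n^I$ and $S_{n-1}^{I'}$ for a suitably modified $I'$, with $\ell_A(\sigma) = \ell_A(\sigma') + (n-p)$. The delicate point is that removing $n$ from position $p$ shifts positions $p+1,\ldots,n$ down by one, flipping their parities: pairs $(i,j)$ with $i<p<j$ switch from odd to even inversions under this relabeling. To deal with this I would either carry a bivariate generating function recording odd and even inversions together (so the recursion closes after a natural substitution), or restrict the recursion to $p$ of a fixed parity and handle the two parities in parallel.

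For the base case $I = \emptyset$, the target identity is
$$\sum_{\sigma \in S_n}(-1)^{\ell_A(\sigma)} x^{L_A(\sigma)} = \prod_{k=2}^n \left(1+(-1)^{k-1}x^{\lfloor k/2 \rfloor}\right).$$
With the parity shifts properly accounted for, the inner sum over the position $p$ of $n$ collapses to the single factor $1+(-1)^{n-1}x^{\lfloor n/2 \rfloor}$, and the induction closes. This handles the trivial quotient; a hand check for small $n$ (e.g. $n=3$: signed sum $1-x^2 = (1-x)(1+x)$) confirms the shape.

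For general $I$, I would exploit the factorization of quotient elements: $\sigma \in S_n^I$ is specified by a value-partition $[n] = V_1 \sqcup \cdots \sqcup V_s \sqcup V_F$ with $|V_k|=|I_k|+1$, placing each $V_k$ in increasing order on the block $B_k = [a_k, a_k+|I_k|]$ and permuting $V_F$ arbitrarily on the free positions $F = [n]\setminus\bigcup_k B_k$. The signed sum then splits into a sum over value-partitions times a signed sum over free permutations (reducible via the base case applied to the segment of free positions, combined with the inductive hypothesis). The main obstacle, and where the proof is most delicate, is showing that the sum over value-partitions collapses to the $q$-multinomial coefficient in $x^2$: one has to verify that odd inversions between two increasing blocks come in parity-balanced groups contributing even powers of $x$, which permits the substitution $q = x^2$ into the classical shuffle interpretation of the $q$-multinomial. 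Matching this recursion to the standard recursion for the $q$-multinomial coefficient, and keeping careful track of the coupling between blocks and free positions (where the parity-shift analysis re-enters), is the most intricate piece of bookkeeping and is exactly where the shifting and compressing ideas referenced in the introduction do the heavy lifting.
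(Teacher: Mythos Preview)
This theorem is not proved in the present paper: it appears in the Preliminaries as a result quoted from \cite{BC}, so there is no in-paper proof to compare against. What you have written is a plan, not a proof; you yourself flag the two hardest steps (the parity-shift bookkeeping and the collapse to the $x^{2}$-multinomial) as ``the main obstacle'' and ``the most intricate piece of bookkeeping'' without carrying them out.

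On the approach itself: your worry about parity shifts is partly misplaced. If you delete $n$ from position $p$, pairs $(i,j)$ with $i,j>p$ shift \emph{both} indices by one, so the condition $i\not\equiv j\pmod 2$ is preserved; the only pairs whose odd/even status flips are those with $i<p<j$. The method actually used in \cite{BC} (and mirrored in this paper for $D_n$, cf.\ Lemma~\ref{zerod} and Theorem~\ref{sq}) avoids that cross-term entirely: one first shows, via the involution $\sigma\mapsto\sigma\,(p-1,p+1)$, that the signed sum over $\{\sigma:\sigma(p)=n\}$ vanishes whenever $p$ is far enough from $I$ and from the ends. This kills all interior positions at once, leaving only positions adjacent to $I$ or to the boundary, where no cross-parity term arises. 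Your bivariate-generating-function idea is an alternative that could in principle work, but it is strictly harder than the involution.

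For general $I$, the value-partition decomposition you describe does not by itself produce the $x^{2}$-multinomial: the number of odd inversions between two increasing blocks depends on the \emph{positions} of the blocks, not just on the value sets $V_k$, so the ``parity-balanced groups'' claim requires exactly the shifting/compressing reductions of \cite{BC} (Propositions~\ref{shd} and its analogue) to move the blocks into a canonical configuration first. In \cite{BC} those reductions are the core of the argument, not an afterthought; your sketch invokes them only in the last sentence. So the outline is pointed in a workable direction, but the two steps you defer are precisely the substance of the proof.
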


In particular, for the whole group we have the following. 

\begin{cor}\label{wgpA}
Let $n \in {\mathbb P},\, n\geq 2$. Then
\[
\sum_{\sigma \in S_n} (-1)^{\ell_A(\sigma)}x^{L_A(\sigma)} = \prod_{i=2}^{n} \left(1+(-1)^{i-1}x^{\left\lfloor\frac{i}{2}\right\rfloor}\right).
\]
\end{cor}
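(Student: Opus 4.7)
The plan is to derive this corollary as the special case $I = \emptyset$ of Theorem \ref{Aquot}. First I would verify that $S_n^{\emptyset} = S_n$: indeed, by definition $S_n^{\emptyset} = \{\sigma \in S_n : D(\sigma) \subseteq S \setminus \emptyset\} = \{\sigma \in S_n : D(\sigma) \subseteq S\} = S_n$, so the left-hand side of the corollary matches the left-hand side of Theorem \ref{Aquot} taken at $I = \emptyset$.

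Next I would unwind the conventions on the right-hand side of Theorem \ref{Aquot} when $I = \emptyset$. Since $I$ has no connected components, we have $s = 0$ and $m = \sum_{k=1}^{0} \left\lfloor \frac{|I_k|+1}{2} \right\rfloor = 0$, so the lower bound of the product becomes $2m+2 = 2$. Moreover, the $x^2$-multinomial coefficient $\mc$ reduces to the empty multinomial coefficient $\left[\begin{array}{c} 0 \end{array}\right]_{x^2}$, which equals $[0]_{x^2}! / 1 = 1$ by the convention $[0]_q! = 1$ recalled in the preliminaries. Substituting these values gives exactly
\[
\sum_{\sigma \in S_n} (-1)^{\ell_A(\sigma)} x^{L_A(\sigma)} = \prod_{k=2}^{n} \left(1+(-1)^{k-1}x^{\left\lfloor k/2 \right\rfloor}\right),
\]
as required.

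There is no substantive obstacle, since Theorem \ref{Aquot} is assumed; the only thing to be careful about is the bookkeeping for the empty index set (that the empty union of intervals is the empty set, that the sum defining $m$ is zero, and that the empty $q$-multinomial coefficient is $1$), all of which are standard conventions.
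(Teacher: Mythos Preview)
Your proof is correct and is exactly the approach the paper intends: the corollary is stated immediately after Theorem~\ref{Aquot} with the words ``In particular, for the whole group we have the following,'' i.e., it is just the case $I=\emptyset$, and your bookkeeping ($s=0$, $m=0$, empty multinomial coefficient equal to $1$) is the routine specialization the paper leaves to the reader.
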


For $J\subseteq [0,n-1]$ we define $J_0\subseteq J$ to be the connected component of $J$ which contains $0$, if $0\in J$, or $J_{0} := \emptyset$ otherwise. Let $J_1,\ldots,J_s$ be the remaining ordered connected components. The following result was conjectured in \cite[Conjecture 1.6]{VS2} and proved in \cite{BC} and independently in \cite{L}.

\begin{thm}
\label{Bquot}
Let $n\in \mathbb P$, $J \subseteq [0,n-1]$, and $J_0,\ldots, J_s$ be the connected components of $J$ indexed as just described.  Then
\[
\sumb=\frac{\prod\limits_{j=a+1}^{n}(1-x^j)}{\prod\limits_{i=1}^{m}(1-x^{2i})}  \mcb 
\]
where $m:=\sum_{i=1}^s \left\lfloor \frac{|J_{i}|+1}{2}\right\rfloor$ and $a:=\min\{ [0, \,n-1] \setminus J\}$.
\end{thm}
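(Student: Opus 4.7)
The plan is to prove the whole-group case $J=\emptyset$ first, giving the identity $\sum_{\sigma\in B_n}(-1)^{\ell_B(\sigma)}x^{L_B(\sigma)}=\prod_{j=1}^n(1-x^j)$, and then to handle general quotients by a separate argument. The naive approach of invoking the parabolic decomposition $\sigma=\sigma^J\sigma_J$ and factoring $\sum_{\sigma\in B_n}$ as $\sum_{\sigma\in B_n^J}\cdot\sum_{\sigma\in(B_n)_J}$ fails: while $\ell_B$ splits additively, the odd length $L_B$ does not, as one can check on $\sigma=[2,-1]\in B_2$ with $J=\{1\}$, where $L_B(\sigma)=1$ but $L_B(\sigma^J)+L_B(\sigma_J)=2$.

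For the whole-group case, I would induct on $n$, classifying $\sigma\in B_n$ by the value $v=\sigma(n)\in\{\pm1,\ldots,\pm n\}$. Deleting position $n$ and monotonically relabeling the remaining values produces a signed permutation $\sigma'\in B_{n-1}$. Using the decomposition $\ell_B=\inv+\negg+\nsp$ together with $L_B=\oinv+\oneg+\onsp$ (Proposition \ref{LB}), I would compute the changes in each of these six statistics as explicit functions of $v$, capturing how position $n$ contributes to the various inversion, negation, and negative-sum-pair counts. Summing $(-1)^{\ell_B(\sigma)-\ell_B(\sigma')}x^{L_B(\sigma)-L_B(\sigma')}$ over the $2n$ choices of $v$ should yield the factor $(1-x^n)$, and the induction closes.

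For general $J$, I would parameterize $\sigma\in B_n^J$ by the block structure dictated by the quotient conditions: positions in $[1,a]$ (when $0\in J$) carry a positive increasing sequence of length $a$, each interval $[a_k,b_k+1]$ carries an increasing signed sequence of length $|J_k|+1$, and the remaining positions are free. This refines into (i) a choice of value-subsets assigned to each block and (ii) a signed arrangement of the remaining values in the free positions. The $\ell_B$ and $L_B$ contributions split into within-block (zero, since blocks are increasing), between-block, block-to-free, and within-free parts. The within-free sum reduces to the whole-group formula for a smaller hyperoctahedral group, while the block contributions are handled by a weighted sum over value-distributions that should produce the $q^2$-multinomial $\mcb$ together with the adjustment factor $\prod_{j=1}^a(1-x^j)/\prod_{i=1}^m(1-x^{2i})$.

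The main obstacle is the parity bookkeeping in $L_B$. Because the blocks begin at positions of varying parity, and $L_B$ requires $i\not\equiv j\pmod 2$, the cross-block inversion counts depend subtly on block positions rather than only on block sizes. The key technical step is to show that after summing over admissible value-distributions with the appropriate signs, these parity-dependent terms collapse cleanly into the Gaussian $q^2$-multinomial $\mcb$ and the denominator $\prod_{i=1}^m(1-x^{2i})$. A natural approach is a sign-reversing involution on mismatched parity configurations combined with the Type A quotient formula (Theorem \ref{Aquot}) to evaluate sub-sums over the free positions; making this collapse precise constitutes the bulk of the technical work.
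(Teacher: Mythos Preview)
This theorem is not proved in the present paper. It appears in the Preliminaries section as a known result, with the sentence immediately preceding it reading ``The following result was conjectured in \cite[Conjecture 1.6]{VS2} and proved in \cite{BC} and independently in \cite{L}.'' There is therefore no proof in this paper against which to compare your proposal.

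As to the proposal on its own terms: it is an outline, not a proof. You correctly identify the central obstruction---that $L_B$ is not additive under the parabolic factorization $\sigma=\sigma^J\sigma_J$, so one cannot simply factor the sum---and you sketch a reasonable strategy (induction for $J=\emptyset$, block decomposition for general $J$). But the substantive content is deferred: the phrase ``summing \ldots\ should yield the factor $(1-x^n)$'' in the whole-group step is an assertion rather than a computation, and for general $J$ you write that ``making this collapse precise constitutes the bulk of the technical work'' without carrying any of it out. In particular, the claimed sign-reversing involution on ``mismatched parity configurations'' is not specified, and it is exactly this parity bookkeeping that makes the theorem nontrivial. The actual proofs in \cite{BC} and \cite{L} proceed quite differently: \cite{BC} establishes shifting and compressing lemmas for the quotients (analogous to Propositions~\ref{shd} of this paper) that reduce an arbitrary $J$ to a canonical form, then handles that form by a recursion on $n$ tracking the position of $\pm n$; the direct block-enumeration you propose is not the route taken there, and it is not clear it can be made to work without essentially rediscovering those reduction lemmas.
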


\section{Definition and general properties}

In this section we define a new statistic, on the even
hyperoctahedral group $D_n$, which is a natural analogue of the odd length statistics that have already been defined and studied in types $A$ and $B$, and study 
some of its basic properties.

Given the descriptions of $L_A$ and $L_B$ in terms of odd inversions, odd negatives and odd negative sum pairs, and the relation between the Coxeter lengths of the Weyl groups of types
$B$ and $D$ (see, e.g., \cite[Propositions 8.1.1 and 8.2.1]{BB}), the following definition is
natural.

\begin{defn}
Let $\sigma \in D_n$. We let
\[L_D(\sigma):=L_B(\sigma)- \oneg(\sigma)= \oinv(\sigma)+ \onsp(\sigma). \]
\end{defn}
For example let $n=5$, $\sigma=[2,-1,5,-4,3]$. Then $L_D(\sigma)=5$.
We call $L_D$ the {\em odd length} of type $D$. Note that the statistic $L_D$ is well defined also on $S_n$ (where it coincides with $L_A$) and on  $B_n$.
In fact, the signed distribution of $L_D$  over any quotient of $D_{n}$ and over
its ``complement'' in $B_n$, is exactly the same, as we now show. For $I \subseteq [0,n-1]$
let $(B_{n} \setminus D_{n})^{I}\eqdef \{ \sigma \in B_n \setminus D_n : \; \sigma (i) <
\sigma (i+1) \mbox{ for all }  i \in I \}$ where  $\sigma (0) \eqdef - \sigma (2)$. Note that 
$(B_n \setminus D_n)^{I}=B_{n}^{I} \setminus D_n^I$ if $I \subseteq [n-1]$.

\begin{lem}\label{compl}
Let $n\in \mathbb P$ and $I \subseteq [0,n-1]$. Then
\[\sum_{\sigma \in D_n ^{I}}{y^{\ell_D(\sigma)}x^{L_D(\sigma)}}=\sum_{\sigma \in (B_n \setminus D_n)^{I}}{y^{\ell_D(\sigma)}x^{L_D(\sigma)}}. \]
In particular, $\sum_{\sigma \in D_n ^{I}}{(-1)^{\ell_D(\sigma)}x^{L_D(\sigma)}}=\sum_{\sigma \in (B_n \setminus D_n)^{I}}{(-1)^{\ell_D(\sigma)}x^{L_D(\sigma)}}$. \end{lem}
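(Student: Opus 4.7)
The approach is to exhibit an explicit involution $\varphi\colon B_n \to B_n$ that swaps $D_n$ with $B_n\setminus D_n$, preserves both $\ell_D$ and $L_D$, and respects every quotient condition in $I \subseteq [0,n-1]$; the signed identity then follows by specialising $y=-1$. The natural candidate is left multiplication by the type-$B$ simple reflection $s_0$, i.e.\ $\varphi(\sigma):=s_0\sigma$. Concretely, letting $k \in [n]$ be the unique index with $|\sigma(k)|=1$, one has $(s_0\sigma)(k)=-\sigma(k)$ and $(s_0\sigma)(j)=\sigma(j)$ for $j\neq k$. Because this flips the parity of $\negg(\sigma)$ by $1$, $\varphi$ is an involution that exchanges $D_n$ with $B_n\setminus D_n$.

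The key observation for statistic preservation is a gap argument: for every $j\neq k$ one has $|\sigma(j)| \geq 2$, so $\sigma(j)\in\{-n,\ldots,-2,2,\ldots,n\}$. Hence, for any $i\neq k$, the conditions $\sigma(i)>1$, $\sigma(i)\geq 2$, and $\sigma(i)>-1$ are mutually equivalent, and similarly $\sigma(i)+1<0$ is equivalent to $\sigma(i)-1<0$. Consequently every pair $(i,k)$ or $(k,j)$ contributes identically to each of $\inv$, $\nsp$, $\oinv$, and $\onsp$ before and after applying $\varphi$, while pairs avoiding $k$ are untouched. Therefore $\ell_D=\inv+\nsp$ and $L_D=\oinv+\onsp$ are both $\varphi$-invariant on all of $B_n$.

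For quotient preservation one must verify that each inequality $\sigma(i)<\sigma(i+1)$ (with $\sigma(0):=-\sigma(2)$, for $i\in I$) survives the sign flip. If $\{i,i+1\}\cap\{k\}=\emptyset$ then nothing changes. The only indices where the inequality is genuinely modified are $i=k-1$ and $i=k$ (when these belong to $[0,n-1]$), and $i=0$ in the sub-case $k\in\{1,2\}$ (via the $D$-convention $\sigma(0)=-\sigma(2)$). In each such case the relevant inequality has $\pm 1$ on one side and an integer of absolute value $\geq 2$ on the other, so the same gap argument shows the truth value is preserved. Hence $\varphi$ restricts to a bijection $D_n^I\to (B_n\setminus D_n)^I$, and the lemma follows.

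The only mildly delicate point is the $D$-type condition at $i=0$, because the convention $\sigma(0):=-\sigma(2)$ couples positions $1$ and $2$ and thus both appear in one inequality. A direct check of the four sub-cases ($k\in\{1,2\}$ paired with $\sigma(k)\in\{+1,-1\}$) confirms that each incarnation of $-\sigma(2)<\sigma(1)$ translates to the equivalent condition after the sign flip. This is essentially the only place where the $D$-flavour of the quotient requires separate verification, and it does not present any real obstacle.
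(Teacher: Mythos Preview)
Your proof is correct and follows the same approach as the paper: left multiplication by the type-$B$ reflection $s_0$, which flips the sign of the entry of absolute value $1$. The paper's proof is a terse two-sentence version of what you wrote; your added detail---the gap argument that $|\sigma(j)|\ge 2$ for $j\neq k$ makes all relevant inequalities insensitive to the sign of $\pm 1$, and the explicit check of the $i=0$ descent condition---fills in exactly what the paper leaves to the reader.
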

\begin{proof}
Left multiplication by $s_0$ (that is, changing the sign of $1$ in the window notation) is a bijection between $D_n^{I}$ and $(B_n\setminus D_n)^{I}$. Moreover, (odd) inversions and  (odd) negative sum pairs are preserved by this operation so
$L_D(s_0 \sigma )=L_D(\sigma)$, and $ \ell_D(s_0 \sigma )=\ell_D(\sigma)$,
for all $ \sigma \in D_{n}$ and the result follows.
\end{proof}
In what follows, since we are mainly concerned with distributions in type $D$, we omit the subscript and write just $\ell$ and $L$ for the length and odd length, respectively, on $D_n$.
We now  show that the generating function of $(-1)^{\ell(\cdot)}x^{L(\cdot)}$ 
over any quotient of $D_{n}$ such that $s_{0}^{D} \in D_{n}^{I}$ can be reduced to elements for which the maximum (or the minimum) is in certain positions. More precisely, we prove that, for a given quotient, our generating function is zero over all elements for which the maximum (or minimum) is sufficiently far from $I$. For a subset $I\subseteq [0,n-1]$ we let $\delta_0 (I)=1$ if $0 \in I$ and $\delta_0 (I)=0$ otherwise.
\begin{lem}\label{zerod}
Let $n\in \mathbb P$, $n\geq 3$, $I\subseteq [0,n-1]$. Let $a \in  [2+\delta_0 (I),n-1]$ be such that $[a-2,a+1]\cap I=\emptyset$. Then
\[\sum_{\substack{\{\sigma \in D_n^I :\\\sigma(a)=n\}}}\f =\sum_{\substack{\{\sigma \in D_n^I :\\\sigma(a)=-n\}}}\f=0.\]
\end{lem}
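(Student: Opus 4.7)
The plan is to construct a sign-reversing, $L$-preserving, fixed-point-free involution on the set $\{\sigma \in D_n^I : \sigma(a) = n\}$, and analogously on $\{\sigma \in D_n^I : \sigma(a) = -n\}$, which immediately forces each signed sum to vanish. The candidate involution $\iota$ is the map that exchanges the entries at positions $a-1$ and $a+1$ of the window notation, leaving all other entries fixed. The case $\sigma(a)=-n$ is handled by the same swap, using that when $\sigma(a) = -n$ the value $n$ does not appear in the window of $\sigma$, so $-n$ plays the role of a strict minimum.

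Granting that $\iota$ stays inside $D_n^I$, the arithmetic goes as follows. Because $\sigma(a) = n$ is the strict maximum of the window---indeed $-n$ is absent from the window whenever $\sigma(a) = n$, since the value $-n$ sits at position $-a < 0$---the pairs $(a-1, a)$ and $(a, a+1)$ have their inversion and NSP statuses dictated by the extremality of $n$ and so are unaffected by the swap. For each $i \notin \{a-1, a, a+1\}$, the combined contributions of the pairs $(i, a-1)$ and $(i, a+1)$ to $\inv$ and to $\nsp$ depend only on the multiset $\{\sigma(a-1), \sigma(a+1)\}$, which $\iota$ preserves. The one remaining pair $(a-1, a+1)$ has its inversion status flipped by $\iota$, contributing $\pm 1$ to $\inv$; its NSP status $[\sigma(a-1)+\sigma(a+1)<0]$ is symmetric in the two entries and hence unchanged. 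Hence $\ell(\iota(\sigma)) = \ell(\sigma) \pm 1$, reversing the sign. Moreover, since $a-1 \equiv a+1 \pmod{2}$, the single flipped pair is not an odd pair, so $\oinv$ and $\onsp$ are both preserved and $L(\iota(\sigma)) = L(\sigma)$. Fixed-point freeness follows from $\sigma(a-1) \ne \sigma(a+1)$.

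To see $\iota(\sigma) \in D_n^I$: the multiset of window entries is preserved by the swap, so $\negg$ parity is unchanged and $\iota(\sigma) \in D_n$. For each ascent condition at $i \in I \cap [1, n-1]$, the hypothesis $[a-2, a+1] \cap I = \emptyset$ forces $i \leq a-3$ or $i \geq a+2$ and hence $\{i, i+1\} \cap \{a-1, a+1\} = \emptyset$, so $\sigma(i)$ and $\sigma(i+1)$ are both untouched. The sole remaining condition, applying when $0 \in I$, reads $-\sigma(2) < \sigma(1)$ and involves the swapped entry $\sigma(2) = \sigma(a-1)$ precisely when $a = 3$. I expect the main obstacle to be this boundary case $a=3$, $0 \in I$, in which the naive swap may carry an element of the quotient outside $D_n^I$. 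I would resolve it by a more delicate argument tailored to this subset---for instance, by refining the involution with additional moves that respect the extra type-$D$ defining condition at position $0$---and verify the required sign-reversing and $L$-preserving properties via direct case analysis.
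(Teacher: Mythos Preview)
Your involution is exactly the one the paper uses: swap the entries in positions $a-1$ and $a+1$ (in signed-permutation language, right-multiply by $(a-1,a+1)(-a+1,-a-1)$). Your verification that this flips the parity of $\ell$ while preserving $L$ is correct and is precisely what the paper asserts, only spelled out in more detail. So on the main case your argument and the paper's coincide.

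Your hesitation about the boundary case $a=3$, $0\in I$ is well founded, and here you are actually more careful than the paper. The paper simply asserts ``in our hypotheses, if $\sigma\in D_n^I$ then also $\sigma^a\in D_n^I$'' and does not treat this case separately. But when $a=3$ and $0\in I$ (with $1,2,3,4\notin I$, which the stated hypotheses allow), the swap moves the value at position~$2$ and can violate the condition $-\sigma(2)<\sigma(1)$. In fact the conclusion of the lemma fails there: for $n=4$, $I=\{0\}$, $a=3$ one computes
\[
\sum_{\substack{\sigma\in D_4^{\{0\}}\\ \sigma(3)=4}}(-1)^{\ell(\sigma)}x^{L(\sigma)}=x^4-x^2\neq 0.
\]
So this is not merely a gap to be patched by a ``more delicate involution''; the statement itself needs a slightly stronger hypothesis in that corner (for instance, additionally requiring $a\geq 4$ when $0\in I$). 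This does not affect any later result in the paper: in every application of the lemma (Theorems~\ref{sq}, \ref{maxquod}, \ref{pro01}) one has either $0\notin I$, or $0,1\in I$ so that $[1,4]\cap I\neq\emptyset$ and $a=3$ is already excluded. Outside that corner your proof is complete and matches the paper's.
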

\begin{proof}
In our hypotheses, if $\sigma \in D_n ^I$ then also $\sigma^{a}:= \sigma (-a-1,-a+1)
 (a-1,a+1)$ is in the same quotient. Clearly $(\sigma^a)^a=\sigma$ and $|\ell(\sigma)-\ell(\sigma^a)|=1$, while, since $\sigma(a)=n$, $L(\sigma^a)=L(\sigma)$. Therefore we have that
\begin{align*}\sum_{\substack{\{\sigma \in  {D}^{I}_{n}: \\  \sigma(a)=n \} }} (-1)^{\ell (\sigma )}x^{L(\sigma )}&=& \sum_{\substack{\{\sigma \in  {D}^{I}_n: \sigma(a)=n, \\ \sigma (a-1) < \sigma (a+1)  \} }}\left( (-1)^{\ell (\sigma )}x^{L(\sigma )} + (-1)^{\ell (\sigma^a )}x^{L(\sigma^a )}\right) =0.\end{align*} 
The proof of the second equality is exactly analogous and is therefore omitted. 
\end{proof}

Although we do not know of any definition of our (or of any other) odd length statistics
in Coxeter theoretic language, it is natural to expect that the only automorphism of
the Dynkin diagram of $D_n$ preserves the corresponding signed generating function.
This is indeed the case, as we now show.

\begin{pro}\label{zerouno}
Let $n \in \mathbb P$, $n\geq 2$, and $I \subseteq [2,n-1]$. Then
\[
\sum_{\sigma \in D_n ^{I \cup \{0\}}}{y^{\ell(\sigma)}x^{L(\sigma)}}=
\sum_{\sigma \in D_n ^{I \cup \{1\}}}{y^{\ell(\sigma)}x^{L(\sigma)}}.
\]
In particular, $\sum_{\sigma \in D_n ^{I \cup \{0\}}}{(-1)^{\ell(\sigma)}x^{L(\sigma)}}=\sum_{\sigma \in D_n ^{I \cup \{1\}}}{(-1)^{\ell(\sigma)}x^{L(\sigma)}}$.
\end{pro}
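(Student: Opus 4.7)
I aim to produce a single bijection $\tau : D_n^{I \cup \{0\}} \to D_n^{I \cup \{1\}}$ that preserves both $\ell$ and $L$; the asserted identity of signed generating functions will then follow termwise. The natural candidate is conjugation by $s_0 = [-1, 2, 3, \ldots, n] \in B_n$, that is, $\tau(\sigma) := s_0 \sigma s_0$. Since $D_n$ is normal in $B_n$, $\tau$ restricts to an involution of $D_n$, and a direct computation on the simple generators shows that $\tau$ is the Coxeter-system automorphism which swaps $s_0^D$ and $s_1^D$ while fixing $s_i^D$ for $i \geq 2$. From this Coxeter-theoretic description, both $\ell(\tau(\sigma)) = \ell(\sigma)$ and the bijectivity $\tau(D_n^{I \cup \{0\}}) = D_n^{I \cup \{1\}}$ (using $I \subseteq [2, n-1]$) are immediate, via the standard fact $D(\tau(\sigma)) = \tau(D(\sigma))$.

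The work therefore lies entirely in showing $L(\tau(\sigma)) = L(\sigma)$. I first record the explicit action of $\tau$ in window notation: if $|\sigma(1)| = 1$ then $\tau(\sigma) = \sigma$ and there is nothing to check; otherwise, letting $k \in [2, n]$ be the unique index with $|\sigma(k)| = 1$, $\tau(\sigma)$ agrees with $\sigma$ except that $\sigma(1)$ and $\sigma(k)$ are each replaced by their negatives. Using $L = \oinv + \onsp$, I examine pair by pair the contribution $[\sigma(p) > \sigma(q)] + [\sigma(p) + \sigma(q) < 0] \in \{0, 1, 2\}$ of each $(p, q) \in [n]^2$ with $p < q$ and $p \not\equiv q \pmod{2}$; pairs with $\{p, q\} \cap \{1, k\} = \emptyset$ are unaffected, and the remaining families (pairs containing $1$ only, pairs containing $k$ only, and the pair $(1, k)$ itself when $k$ is even) each demand a brief verification.

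The main obstacle is that the two sign flips at positions $1$ and $k$ do \emph{not} preserve the contribution of an affected pair individually. The crucial arithmetic fact making the simultaneous double flip work is that for any pair $(p, q)$ in which exactly one of $\sigma(p), \sigma(q)$ is $\pm 1$ while the other has absolute value $\geq 2$, the quantity $[\sigma(p) > \sigma(q)] + [\sigma(p) + \sigma(q) < 0]$ is automatically equal to $1$, as is verified in four sign sub-cases. This rigidifies the contribution: a single sign flip at $k$ merely swaps which of the two indicators equals $1$, and a short direct calculation handles the pair $(1, k)$ (whose contribution is again forced to be $1$ and preserved by the double flip). Consistent with the remark in the paper that no Coxeter-theoretic definition of $L_D$ is known, this arithmetic lemma appears essential: the $L$-invariance does not follow formally from $\tau$ being a Coxeter automorphism.
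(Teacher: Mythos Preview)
Your route via the diagram automorphism $\tau(\sigma)=s_0\sigma s_0$ is valid and is genuinely different from the paper's argument. The paper does not produce a direct bijection $D_n^{I\cup\{0\}}\to D_n^{I\cup\{1\}}$; instead it uses \emph{right} multiplication by $s_0$ as a bijection $D_n^{I\cup\{0\}}\to (B_n\setminus D_n)^{I\cup\{1\}}$, checks that $\ell$ and $L$ are preserved, and then appeals to Lemma~\ref{compl} (which is \emph{left} multiplication by $s_0$) to equate the generating function over $(B_n\setminus D_n)^{I\cup\{1\}}$ with that over $D_n^{I\cup\{1\}}$. Your conjugation is exactly the composite of these two steps, so the arguments are close cousins; yours has the advantage of staying inside $D_n$ and making the role of the Dynkin automorphism explicit.

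There is, however, an error in your ``crucial arithmetic fact''. Writing $f(u,v):=[u>v]+[u+v<0]$, your claim that $f(\sigma(p),\sigma(q))=1$ whenever exactly one of $\sigma(p),\sigma(q)$ is $\pm 1$ is only half right: it holds when $\sigma(q)=\pm 1$ and $|\sigma(p)|\ge 2$, but when $\sigma(p)=\pm 1$ and $|\sigma(q)|\ge 2$ one gets $f=0$ if $\sigma(q)>0$ and $f=2$ if $\sigma(q)<0$. So for odd-parity pairs $(k,q)$ with $q>k$ the contribution is not rigidly $1$, and ``swaps which indicator equals $1$'' does not describe what happens. You also do not say how pairs $(1,q)$ with $q\ne k$ (where \emph{neither} value is $\pm 1$) are handled. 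The repair is shorter than your original lemma: for distinct nonzero $u,v$ with $|u|\ne|v|$ one has the identity $f(-u,v)=[u+v<0]+[u>v]=f(u,v)$, which immediately covers every pair $(1,q)$ with $q\ne k$ and every pair $(k,q)$ with $q>k$, since in those only the first coordinate is negated by $\tau$. The remaining pairs $(p,k)$ with $1\le p<k$ have their second coordinate negated, and there your assertion $f(c,\pm 1)=1$ for $|c|\ge 2$ is correct and suffices (including $p=1$, where both coordinates flip but $f(-c,\mp 1)=f(c,\mp 1)=1$). With this correction your plan goes through.
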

\begin{proof}
Right multiplication by $s_{0}$ (i.e., changing the sign of the leftmost element in the window notation) is a bijection between $D_{n}
^{I \cup \{ 0 \}}$ and $(B_{n} \setminus D_{n} )^{I \cup \{ 1 \}}$. Furthermore, if $\sigma \in D_{n}$, then
\begin{eqnarray*} 
\oinv (\sigma s_{0}) & = & \oinv (\sigma ) - |\{ i \in [2,n]: i \equiv 0 \pmod{2}, \; \sigma (1)>\sigma (i) \} | \\
& & + |\{ i \in [2,n]: i \equiv 0 \pmod{2}, \; -\sigma (1) > \sigma (i) \} | ,\\
 \onsp (\sigma s_{0}) & = &  \onsp (\sigma ) - |\{ i \in [2,n]: i \equiv 0 \pmod{2}, \; \sigma (1) + \sigma (i) <0 \} | \\
& & + | \{ i \in [2,n]: i \equiv 0 \pmod{2}, \; -\sigma (1)+ \sigma (i) <0 \} | ,\\ 
\inv (\sigma s_{0} )& =& \inv (\sigma ) -|\{ i \in [2,n]: \; \sigma (1) > \sigma (i) \} | + |\{ i \in [2,n]: \; - \sigma (1)>
\sigma (i) \} |,\\
\mbox{and} \quad \qquad& &
\\
\nsp  (\sigma s_{0} ) &=& \nsp  (\sigma ) -|\{ i \in [2,n]: \; \sigma (1) + \sigma (i)<0 \} | + |\{ i \in [2,n]: \; - \sigma (1)+
\sigma (i) <0 \} |. \end{eqnarray*} 
Therefore $L(\sigma s_{0})=L(\sigma )$ and $\ell (\sigma s_{0})=\ell (\sigma )$. Hence
\[
\sum_{\sigma \in (B_{n} \setminus D_{n}) ^{I \cup \{ 1\}}}y^{\ell (\sigma )} \; x^{L(\sigma )} =
\sum _{\sigma \in D_{n}^{I \cup \{ 0 \}}}y^{\ell (\sigma s_{0})} \; x^{L(\sigma s_{0})} =
\sum _{\sigma \in D_{n}^{I \cup \{ 0 \}}}y^{\ell (\sigma )} \; x^{L(\sigma )} ,
\]
and the result follows from Lemma \ref{compl}. 
\end{proof}

We conclude by showing
that when $I$ does not contain $0$, each connected component can be shifted to the left or to the right, as long as it remains a connected component, without changing the generating function over the corresponding quotient. The proof is identical to that of \cite[Proposition 3.3]{BC}, and is therefore omitted.
\begin{pro}\label{shd}
Let $I\subseteq [n-1]$, $i\in \mathbb P$, $k\in \N$ be such that $[i,\,i+2k]$ is a connected component of $I$ and $i+2k+2 \notin I$. Then
\[
\sum_{\sigma \in D_n ^I}{(-1)^{\ell(\sigma)}x^{L(\sigma)}}=\sum_{\sigma \in D_n ^{I\cup \tilde{I}}}{(-1)^{\ell(\sigma)}x^{L(\sigma)}}=\sum_{\sigma \in D_n ^{\tilde{I}}}{(-1)^{\ell(\sigma)}x^{L(\sigma)}}
\]
where $\tilde{I}:=(I\setminus\{i\})\cup\{i+2k+1\}$.
\end{pro}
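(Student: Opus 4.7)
The plan is to adapt the argument of \cite[Proposition 3.3]{BC} verbatim to type $D$. This carries over essentially unchanged because the hypothesis $I \subseteq [n-1]$ forces $0 \notin I$, so the only generator that distinguishes the type-$D$ quotient from the type-$B$ quotient, namely $s_0^D$, does not enter; the inversion-theoretic statistics $\inv,\oinv,\nsp,\onsp$ are defined identically in the two types.

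Set $I' := I \cup \tilde{I} = I \cup \{i+2k+1\}$. Since $I \subseteq I'$ and $\tilde{I} \subseteq I'$, one has $D_n^{I'} \subseteq D_n^I \cap D_n^{\tilde{I}}$. The proposition is therefore equivalent to the two vanishing identities
\[
\sum_{\sigma \in D_n^I \setminus D_n^{I'}} (-1)^{\ell(\sigma)} x^{L(\sigma)} = 0 = \sum_{\sigma \in D_n^{\tilde{I}} \setminus D_n^{I'}} (-1)^{\ell(\sigma)} x^{L(\sigma)},
\]
for then both outer sums in the proposition reduce to the sum over $D_n^{I'}$. Elements of $D_n^I \setminus D_n^{I'}$ are precisely those $\sigma \in D_n$ satisfying the ascent conditions of $I$ (so in particular $\sigma(i) < \sigma(i+1) < \cdots < \sigma(i+2k+1)$) together with the descent condition $\sigma(i+2k+1) > \sigma(i+2k+2)$; the symmetric description holds for $D_n^{\tilde{I}} \setminus D_n^{I'}$, with a descent at position $i$ instead. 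I would define a sign-reversing, $L$-preserving involution on each set exactly as in \cite[Proposition 3.3]{BC}: it locally reorders the values of $\sigma$ on the window positions $[i,i+2k+2]$ by a prescribed move that stays in the set, flips the parity of $\ell$, and leaves $L$ unchanged. That the relevant positions are available for the reordering uses the connectedness hypothesis, namely $i-1 \notin I$ and $i+2k+1,\,i+2k+2 \notin I$.

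The main obstacle is verifying $L$-preservation. This relies on a parity count that exploits the odd cardinality $2k+1$ of the shifted component: the positions involved in the local move contain matching numbers of even- and odd-indexed slots, so the contributions to $\oinv$ and $\onsp$ introduced by the move cancel, while $\inv$ (respectively $\nsp$) changes by an odd amount, producing the required sign flip in $(-1)^{\ell(\sigma)}$. Because $s_0^D$ is never invoked (the hypothesis $I \subseteq [n-1]$ places the whole construction away from position $0$), this verification is formally the same as the one carried out in \cite{BC} for type $B$, and so the proof transfers without modification.
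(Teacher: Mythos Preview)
Your proposal is correct and matches the paper's own treatment: the paper simply states that the proof is identical to that of \cite[Proposition~3.3]{BC} and omits it, and your plan is precisely to transport that argument to $D_n$, noting that $I\subseteq[n-1]$ keeps $s_0^D$ out of play so the quotient conditions and the statistics $\inv,\oinv,\nsp,\onsp$ behave exactly as in the source. Your sketch in fact supplies more detail than the paper does (the reduction to the two vanishing sums over $D_n^{I}\setminus D_n^{I'}$ and $D_n^{\tilde I}\setminus D_n^{I'}$, and the parity reasoning behind $L$-preservation), all consistent with the involution hinted at in the subsequent proposition via the transpositions $(i+2j,\,i+2k+2)(-i-2j,\,-i-2k-2)$.
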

Shifting is also allowed when $I$ contains $0$, but only for connected components which are sufficiently far from it, as stated in the next result.
\begin{pro}
Let $I\subseteq [0,n-1]$, $i\in \mathbb P$, $i>2$ and $k\in \N$ such that $[i,\,i+2k]$ is a connected component of $I$ and $i+2k+2 \notin I$. Then
\[
\sum_{\sigma \in D_n ^I}{(-1)^{\ell(\sigma)}x^{L(\sigma)}}=\sum_{\sigma \in D_n ^{I\cup \tilde{I}}}{(-1)^{\ell(\sigma)}x^{L(\sigma)}}=\sum_{\sigma \in D_n ^{\tilde{I}}}{(-1)^{\ell(\sigma)}x^{L(\sigma)}}
\]
where $\tilde{I}:=(I\setminus\{i\})\cup\{i+2k+1\}$.
\end{pro}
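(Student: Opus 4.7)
The proof is a routine adaptation of the preceding proposition — equivalently, of [BC, Proposition 3.3] — the only extra ingredient being a verification that the hypothesis $i>2$ is exactly what makes the argument survive the additional descent constraint imposed by $s_0^D$ when $0\in I$. As in that proof, I would establish the two identities
\begin{equation*}
\sum_{\sigma \in D_n^I}(-1)^{\ell(\sigma)}x^{L(\sigma)}
=\sum_{\sigma \in D_n^{I\cup \tilde I}}(-1)^{\ell(\sigma)}x^{L(\sigma)}
\quad\text{and}\quad
\sum_{\sigma \in D_n^{\tilde I}}(-1)^{\ell(\sigma)}x^{L(\sigma)}
=\sum_{\sigma \in D_n^{I\cup \tilde I}}(-1)^{\ell(\sigma)}x^{L(\sigma)}
\end{equation*}
separately. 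Since $D_n^{I\cup \tilde I}$ sits inside both $D_n^I$ and $D_n^{\tilde I}$, each identity reduces to exhibiting a sign-reversing, $L$-preserving involution on the corresponding difference set, consisting of the permutations that fail the single additional monotonicity condition imposed by $\tilde I\setminus I$ (respectively $I\setminus \tilde I$). The involution is the same local rearrangement as in the preceding proposition: it permutes the values of $\sigma$ within a short window of consecutive positions surrounding the connected component $[i,i+2k]$, and its effect on $\inv$, $\nsp$, $\oinv$, and $\onsp$ has already been computed in [BC].

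The genuinely new verification, compared with the preceding proposition (where one had $I\subseteq [n-1]$, so $0\notin I$), is that the involution remains well defined when $0\in I$; that is, that it maps $D_n^I$ into itself and $D_n^{\tilde I}$ into itself. When $0\in I$, membership in $D_n^I$ imposes the extra condition $\sigma(0):=-\sigma(2)<\sigma(1)$, i.e.\ $\sigma(1)+\sigma(2)>0$, an inequality involving only positions $1$ and $2$. The hypothesis $i>2$ gives $i-1\geq 2$, and hence the window of positions on which the involution acts is contained in $\{2,3,\ldots,n\}$ and, in particular, leaves position $1$ fixed. A short inspection analogous to the boundary analysis already carried out in [BC, Proposition 3.3] shows that the pair of values $(\sigma(1),\sigma(2))$ entering the $s_0^D$ inequality is preserved by the involution, so the inequality $\sigma(1)+\sigma(2)>0$ is never broken. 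Preservation of the parity of $\negg(\sigma)$ is automatic, since the involution merely permutes values among a fixed set of positions.

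The principal — indeed, essentially the only — obstacle in the argument is precisely this compatibility check with the $s_0^D$ descent relation: without the buffer $i>2$, the involution could alter the value at position $1$ or disrupt the inequality $\sigma(1)+\sigma(2)>0$, and an element of $D_n^I$ might be sent outside the quotient. The bound $i>2$ is thus exactly the minimal separation needed to keep the shifting move from interfering with the type-$D$ generator at position $0$. Once this point is handled, the remaining verifications — sign-reversal of $(-1)^{\ell(\sigma)}$, invariance of $L(\sigma)$, and compatibility with the unshifted components of $I$ — are identical to those already carried out in the preceding proposition and in [BC, Proposition 3.3], and no new ideas are required.
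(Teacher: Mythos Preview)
Your proposal is correct and follows essentially the same approach as the paper: both defer to the argument of \cite[Proposition 3.3]{BC} and isolate the one new point, namely that the hypothesis $i>2$ guarantees the involution does not interfere with the $s_0^D$-descent condition $-\sigma(2)<\sigma(1)$. The paper is even terser than you are --- it simply records the explicit involution $\sigma\mapsto\sigma\,(i+2j,\,i+2k+2)(-i-2j,\,-i-2k-2)$, from which one sees that only positions $\geq i\geq 3$ are moved, so both $\sigma(1)$ and $\sigma(2)$ are automatically fixed and your ``short inspection'' is in fact vacuous.
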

\begin{proof}
The proof is analogous to that of \cite[Proposition 3.3]{BC} noting that , since $i>2$, $\sigma \in D_n^{I}$ if and only if $\sigma(i+2j,i+2k+2)(-i-2j,-i-2k-2)\in D_n^I$.
\end{proof}

\section{Trivial quotient}

In this section, using the results in the previous one, we compute the generating function of $(-1)^{\ell(\cdot)}x^{L(\cdot)}$ over the whole even hyperoctahedral group $D_n$. In particular, we obtain that this generating function is
the square of the corresponding one in type $A$ (i.e., for the symmetric group).

\begin{thm}\label{sq}
Let $n\in \mathbb{P}$, $n\geq 2$. Then
\[
\sum_{\sigma \in D_n}{(-1)^{\ell (\sigma)}x^{L(\sigma)}}=\prod_{j=2}^{n}(1+(-1)^{j-1}x^{\left\lfloor\frac{j}{2}\right\rfloor})^2 .
\]
\end{thm}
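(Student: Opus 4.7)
The plan is induction on $n$, with the base case $n=2$ handled by directly enumerating $D_2=\{[1,2],[2,1],[-1,-2],[-2,-1]\}$ and checking that $\sum_{\sigma\in D_2}(-1)^{\ell(\sigma)}x^{L(\sigma)}=1-2x+x^2=(1-x)^2$, which matches the claimed product.

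For $n\geq 3$, set $P_n(x):=\sum_{\sigma\in D_n}(-1)^{\ell(\sigma)}x^{L(\sigma)}$. The first move is to apply Lemma \ref{zerod} with $I=\emptyset$: since $\delta_0(\emptyset)=0$ and the condition $[a-2,a+1]\cap\emptyset=\emptyset$ holds trivially, every $a\in[2,n-1]$ contributes $\sum_{\sigma\in D_n,\,\sigma(a)=\pm n}(-1)^{\ell(\sigma)}x^{L(\sigma)}=0$. Hence $P_n(x)$ is a sum of just four terms, corresponding to $\sigma(1)\in\{n,-n\}$ and $\sigma(n)\in\{n,-n\}$. I would handle each by restricting $\sigma$ to its complementary $n-1$ positions to obtain $\sigma'\in B_{n-1}$ and tracking how the four underlying statistics $\inv,\nsp,\oinv,\onsp$ change. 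When the removed entry is $+n$ the number of negatives is preserved, so $\sigma'\in D_{n-1}$; when it is $-n$ the parity of negatives flips and $\sigma'\in B_{n-1}\setminus D_{n-1}$, at which point Lemma \ref{compl} is invoked to rewrite the resulting sum as $P_{n-1}(x)$. For the two position-$1$ cases, relabelling $[2,n]\to[1,n-1]$ shifts every index by $1$, but this preserves the parity of \emph{differences} of positions, so $\oinv$ and $\onsp$ of the restriction still coincide cleanly with those of $\sigma'$.

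A short computation for each of the four sub-cases then gives contributions $P_{n-1}(x)$ from $\sigma(n)=n$, $x^{2\lfloor n/2\rfloor}P_{n-1}(x)$ from $\sigma(n)=-n$ (both $\oinv$ and $\onsp$ gain $\lfloor n/2\rfloor$; both $\inv$ and $\nsp$ gain $n-1$, so the sign is unchanged), and $(-1)^{n-1}x^{\lfloor n/2\rfloor}P_{n-1}(x)$ from each of $\sigma(1)=\pm n$ (only one of $\oinv,\onsp$ and the corresponding one of $\inv,\nsp$ gains contributions). Summing yields
\[
P_n(x)=\bigl(1+2(-1)^{n-1}x^{\lfloor n/2\rfloor}+x^{2\lfloor n/2\rfloor}\bigr)P_{n-1}(x)=\bigl(1+(-1)^{n-1}x^{\lfloor n/2\rfloor}\bigr)^2 P_{n-1}(x),
\]
and the product formula follows by induction. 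The main obstacle is the case-by-case accounting of how each of $\inv,\nsp,\oinv,\onsp$ transforms in the four sub-cases, keeping track of parities of positions; once this arithmetic is completed the recurrence's multiplier collapses to a perfect square, and in fact to the square of the $n$-th factor in Corollary \ref{wgpA}, making the ``square of type $A$'' interpretation of the result transparent.
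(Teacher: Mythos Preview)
Your proposal is correct and follows essentially the same argument as the paper's proof: both use Lemma~\ref{zerod} with $I=\emptyset$ to reduce to the four positions $\sigma^{-1}(\pm n)\in\{1,n\}$, compute the shift in $\ell$ and $L$ in each case, invoke Lemma~\ref{compl} for the $-n$ cases, and obtain the recurrence $P_n(x)=(1+(-1)^{n-1}x^{\lfloor n/2\rfloor})^2 P_{n-1}(x)$. The only cosmetic difference is that you verify the base case $n=2$ explicitly, whereas the paper leaves it implicit.
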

\begin{proof}
We proceed by induction on $n$. By Lemma \ref{zerod}, the sum over all elements for which $n$ or $-n$ appears in positions different from $1$ and $n$ is zero. So the generating function over $D_n$ reduces to
\begin{align*}
\sum_{\sigma \in D_n} {(-1)^{\ell(\sigma)}x^{L(\sigma)}}&=
\sum_{\substack{ \{\sigma \in  D_n: \\ \sigma(1)=n\}}} 
{(-1)^{\ell(\sigma)}x^{L(\sigma)}} +\sum_{\substack{ \{\sigma \in  D_n: \\ \sigma(n)=n\} }
}{(-1)^{\ell(\sigma)}x^{L(\sigma)}}
\\
&+
\sum_{\substack{ \{\sigma \in  D_n: \\ \sigma(1)=-n\} }}{(-1)^{\ell(\sigma)}x^{L(\sigma)}}+\sum_{\substack{\{\sigma \in  D_n: \\ \sigma(n)=-n\} 
}}{(-1)^{\ell(\sigma)}x^{L(\sigma)}} \\
&= \sum_{\sigma \in D_{n-1}}{(-1)^{\ell(\tilde{\sigma})}x^{L(\tilde{\sigma})}} +
\sum_{\sigma \in D_{n-1}}{(-1)^{\ell(\sigma)}x^{L(\sigma)}} \\
&+ \!\!\!\!\sum_{\sigma \in B_{n-1}\setminus D_{n-1}}\!\!{(-1)^{\ell(\hat{\sigma})}x^{L(\hat{\sigma})}}+\!\!\!\!\sum_{\sigma \in B_{n-1}\setminus D_{n-1}}\!\!{(-1)^{\ell(\check{\sigma})}x^{L(\check{\sigma})}} ,
\end{align*}
\normalsize
where $\tilde\sigma:=[n,\sigma(1),\ldots,\sigma(n-1)]$, $\hat\sigma:=[-n,\sigma(1),\ldots,\sigma(n-1)],$
 and $\check\sigma:=[\sigma(1),\ldots,\sigma(n-1),-n]$.
But, by our definition and Proposition 8.2.1 of \cite{BB}, we have that
\begin{align}
L(\tilde\sigma)&
=L(\sigma)+m, \qquad \ell(\tilde\sigma)=\ell(\sigma)+n-1\\ \label{ha}
L(\hat\sigma)&
=L(\sigma)+m, \qquad \ell(\hat\sigma)=\ell(\sigma)+n-1\\ \label{che}
L(\check\sigma)&
=L(\sigma)+2m,\:\: \quad \ell(\check\sigma)=\ell(\sigma)+2(n-1),
\end{align}
where $m \eqdef \left\lfloor \frac{n}{2} \right\rfloor $. Therefore
\[\sum_{\sigma \in D_{n-1}}{(-1)^{\ell(\tilde{\sigma})}x^{L(\tilde{\sigma})}}=(-1)^{n-1}x^m\sum_{\sigma \in D_{n-1}}{(-1)^{\ell({\sigma})}x^{L({\sigma})}}\]
and, similarly,
\begin{eqnarray*}
\sum_{\sigma \in B_{n-1}\setminus D_{n-1}}{(-1)^{\ell(\hat{\sigma})}x^{L(\hat{\sigma})}}&=&(-1)^{n-1}x^m\sum_{\sigma \in B_{n-1}\setminus D_{n-1}}{(-1)^{\ell({\sigma})}x^{L({\sigma})}}\\
\sum_{\sigma \in B_{n-1}\setminus D_{n-1}}{(-1)^{\ell(\check{\sigma})}x^{L(\check{\sigma})}}&=&x^{2m}\sum_{\sigma \in B_{n-1}\setminus D_{n-1}}{(-1)^{\ell({\sigma})}x^{L({\sigma})}}.
\end{eqnarray*}
\vspace{3mm}

So by  Lemma \ref{compl} and our induction hypothesis we obtain that
\begin{eqnarray*}
\sum_{\sigma \in D_n} {(-1)^{\ell(\sigma)}x^{L(\sigma)}}&=&(1+(-1)^{n-1}x^m) \sum_{\sigma \in D_{n-1}}{(-1)^{\ell(\sigma)}x^{L(\sigma)}} +\\
&+&((-1)^{n-1}x^m +x^{2m})\!\!\!\!\!\sum_{\sigma \in B_{n-1}\setminus D_{n-1}}{\!\!\!\!\!\!(-1)^{\ell(\sigma)}x^{L(\sigma)}}\\
&=&(1+2(-1)^{n-1}x^{m} +x^{2m})\sum_{\sigma\in D_{n-1}}{(-1)^{\ell(\sigma)}x^{L(\sigma)}}\\
&=&\left(1+(-1)^{n-1}x^{\left\lfloor \frac{n}{2} \right\rfloor }\right)^2\sum_{\sigma \in D_{n-1}} (-1)^{\ell(\sigma)}x^{L(\sigma)},\\
\end{eqnarray*}
and the result follows by induction.
\end{proof}

As an immediate consequence of Theorem \ref{sq} and of Corollary \ref{wgpA} we obtain the
following result.
\begin{cor}
\label{DA}
Let $n \in \mathbb{P}$, $n \geq 2$. Then
\[ \sum _{\sigma \in D_{n}} (-1)^{\ell (\sigma )} \, x^{L}(\sigma ) =
\left( \sum _{\sigma \in S_{n}} (-1)^{\ell _{A}(\sigma )} \, x^{L_{A}(\sigma )} \right)^{2}.
\Box \]
\end{cor}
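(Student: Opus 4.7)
The plan is to observe that Corollary \ref{DA} is an immediate algebraic consequence of combining the two formulas already established, namely Theorem \ref{sq} and Corollary \ref{wgpA}. The strategy is simply: write both generating functions in their closed-product form, notice that the product appearing in Theorem \ref{sq} is visibly the square of the product appearing in Corollary \ref{wgpA}, and conclude.

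More concretely, I would proceed as follows. First, by Theorem \ref{sq}, for $n \geq 2$ the left-hand side equals
\[
\sum_{\sigma \in D_n} (-1)^{\ell(\sigma)} x^{L(\sigma)} = \prod_{j=2}^{n}\left(1+(-1)^{j-1}x^{\left\lfloor \frac{j}{2}\right\rfloor}\right)^{2}.
\]
Next, by Corollary \ref{wgpA}, for the same range of $n$ we have
\[
\sum_{\sigma \in S_n} (-1)^{\ell_A(\sigma)} x^{L_A(\sigma)} = \prod_{i=2}^{n}\left(1+(-1)^{i-1}x^{\left\lfloor \frac{i}{2}\right\rfloor}\right).
\]
Squaring the latter identity gives exactly the product appearing in the former, so the two right-hand sides agree and the corollary follows.

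There is no real obstacle here since both ingredients have already been proved in the paper; the only point worth double-checking is that the indexing ranges match (both products run from $2$ to $n$ with the same exponent $\lfloor i/2 \rfloor$ and the same sign $(-1)^{i-1}$), and that the hypothesis $n \geq 2$ is the common range of validity of the two cited results. Given this, the proof is essentially a one-line comparison of closed forms and needs no further bijective or inductive argument.
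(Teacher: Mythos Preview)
Your proposal is correct and follows exactly the paper's own approach: the corollary is stated as an immediate consequence of Theorem \ref{sq} and Corollary \ref{wgpA}, obtained by comparing the two closed product formulas. No additional argument is needed.
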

It would be interesting to have a direct proof of Corollary \ref{DA}.

\vspace{3mm}
\section{Maximal and other quotients}
In this section we compute, using the results in \S 3, the signed generating function of the odd length over the maximal, and some other,
quotients of $D_{n}$. In particular, we obtain that these generating functions always factor nicely.

\begin{thm}\label{maxquod}
Let $n\in \mathbb P$, $n\geq 3$ and $i\in [0,n-1]$. Then
\[
\sum_{\sigma \in D_n^{\{i\}}}{ (-1)^{\ell(\sigma)}x^{L(\sigma)}}=(1-x^2)\prod_{j=4}^n (1+(-1)^{j-1} x^{\left\lfloor\frac{j}{2} \right\rfloor})^2.
\]
\end{thm}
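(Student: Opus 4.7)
The plan is to reduce to a single maximal quotient using the symmetries of Section~3, and then to argue by induction on $n$. By Proposition \ref{zerouno} with $I=\emptyset$, $D_n^{\{0\}}$ and $D_n^{\{1\}}$ have the same signed generating function; iterating Proposition \ref{shd} then identifies $D_n^{\{i\}}$ with $D_n^{\{i+1\}}$ for $i\in[n-2]$. It therefore suffices to prove the formula for $i=n-1$, with the base case $n=3$ verified directly.

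For the inductive step, Lemma \ref{zerod} confines the contributing positions of $\pm n$ in $\sigma\in D_n^{\{n-1\}}$ to $\{1,n-2,n-1,n\}$, and $\sigma(n-1)=n$, $\sigma(n)=-n$ are ruled out by the quotient condition $\sigma(n-1)<\sigma(n)$. The ``boundary'' cases $\sigma(1)=\pm n$ reduce, via removal of position $1$ (a uniform shift that preserves all pair parities), to $D_{n-1}^{\{n-2\}}$ or $(B_{n-1}\setminus D_{n-1})^{\{n-2\}}$; by Lemma \ref{compl} and the induction hypothesis each contributes $(-1)^{n-1}x^m F_{n-1}^{\{n-2\}}$, where $m=\left\lfloor\frac{n}{2}\right\rfloor$ and $F_{n-1}^{\{n-2\}}$ denotes the generating function that is to be proved for $D_{n-1}^{\{n-2\}}$. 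The remaining ``middle'' cases are handled by the involution $\iota_{n-1}(\sigma):=\sigma\cdot(-n,-n+2)(n-2,n)$, which swaps the values at positions $n-2$ and $n$. Comparing $\oinv,\onsp,\inv,\nsp$ on $\sigma$ and $\iota_{n-1}(\sigma)$, and using that the pair $(n-2,n)$ has equal parity (so enters only $\inv$ and $\nsp$), one finds: (i) on $\{\sigma(n-1)=-n\}$ the map $\iota_{n-1}$ is a fixed-point-free involution that preserves $L$ and flips $(-1)^{\ell}$, so this sub-sum vanishes; (ii) on $\{\sigma(n-2)=n\}$ it bijects onto $\{\sigma(n)=n,\ \sigma(n-1)<\sigma(n-2)\}\subset D_n^{\{n-1\}}$ with the same sign-reversing behavior, cancelling this subset of the $\sigma(n)=n$ case and leaving its complement $\{\sigma(n)=n,\ \sigma(n-1)>\sigma(n-2)\}$, which after removing position $n$ corresponds to $\tau\in D_{n-1}^{\{n-2\}}$ and contributes $F_{n-1}^{\{n-2\}}$.

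The crux is step (iii), the case $\sigma(n-2)=-n$. Here $\iota_{n-1}$ bijects this set onto $H_1:=\{\sigma\in D_n:\sigma(n)=-n,\ \sigma(n-1)<\sigma(n-2)\}$, which lies \emph{outside} $D_n^{\{n-1\}}$, and a careful statistic computation shows that $L$ now increases by $2$ and $\ell$ by $3$ under the map. Independently, one computes $\sum_{\sigma\in H_1}(-1)^{\ell(\sigma)}x^{L(\sigma)}=x^{2m}\bigl(F_{n-1}^{\emptyset}-F_{n-1}^{\{n-2\}}\bigr)$ by removing position $n$ (whose value $-n$ contributes $x^{2m}$ to $L$ and $2(n-1)$ to $\ell$) and applying Lemma \ref{compl}, the induction hypothesis, and Theorem \ref{sq}. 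Solving then yields the case-(iii) contribution $x^{2m}F_{n-1}^{\{n-2\}}$. Summing all the surviving contributions gives $\bigl(1+(-1)^{n-1}x^m\bigr)^2 F_{n-1}^{\{n-2\}}$, which by the induction hypothesis equals $(1-x^2)\prod_{j=4}^n\bigl(1+(-1)^{j-1}x^{\left\lfloor\frac{j}{2}\right\rfloor}\bigr)^2$, as claimed. The main technical obstacle is the bookkeeping in case (iii): establishing $(\Delta L,\Delta\ell)=(2,3)$ under $\iota_{n-1}$ requires inspecting the three pairs $(n-2,n-1)$, $(n-1,n)$, and $(n-2,n)$ in each of the four statistics $\oinv$, $\onsp$, $\inv$, $\nsp$.
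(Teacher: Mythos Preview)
Your argument is correct and follows the same overall strategy as the paper's: reduce to a single representative via Propositions~\ref{zerouno} and~\ref{shd}, then induct on $n$ using Lemma~\ref{zerod} to confine the position of $\pm n$, sign-reversing involutions on the surviving positions, Lemma~\ref{compl}, and Theorem~\ref{sq} to close the recursion. The difference is only in the choice of representative: the paper reduces to $i=1$, so that $\pm n$ is confined to positions $1,2,3,n$, and handles the ``middle'' cases $\sigma(3)=\pm n$ by the swap $[\sigma(2),\sigma(1),\pm n,\sigma(3),\dots]$, which stays inside the quotient and produces the differences $\sum_{D_{n-1}}-\sum_{D_{n-1}^{\{1\}}}$ directly. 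You instead reduce to $i=n-1$, confine $\pm n$ to $1,n-2,n-1,n$, and use the swap $\iota_{n-1}$ of positions $n-2$ and $n$; your case~(iii) is the one genuine twist, since $\iota_{n-1}$ sends $\{\sigma(n-2)=-n\}$ \emph{outside} the quotient, forcing you to evaluate the auxiliary set $H_1$ separately and then use the identity $F_{n-1}^{\emptyset}-F_{n-1}^{\{n-2\}}=-x^2 F_{n-1}^{\{n-2\}}$ (an immediate consequence of Theorem~\ref{sq} and the induction hypothesis) to get the clean contribution $x^{2m}F_{n-1}^{\{n-2\}}$. Both routes rely on Theorem~\ref{sq} at the end, and neither is materially shorter; they are two symmetric realizations of the same idea.
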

\begin{proof}
By Propositions \ref{zerouno} (with $I = \emptyset$) and \ref{shd}, we may assume $i=1$. We proceed by induction on $n \geq 3$.
  By Lemma \ref{zerod} we have that the sum over $\sigma\in D^{\{1\}}_n$ such that $n$ or $-n$ appear in the window in any position but $1,2,3$, or $n$ is zero. Furthermore, if $\sigma \in D_n^{\{1\}}$ then $\sigma^{-1}(n)\neq 1$ and $\sigma^{-1}(-n)\neq 2$. Moreover, the map $\sigma
\mapsto \sigma (1,3) (-1,-3)$ is a bijection of $\{ \sigma \in D_{n}^{\{ 1\} }: \; \sigma (2)
=n \}$ in itself. But $L (\sigma (1,3)(-1,-3))=L(\sigma )$ and $\ell 
(\sigma ) \not \equiv \ell (\sigma (1,3)(-1,-3)) \pmod{2} $ for all $\sigma \in D_{n}^{\{
1\} }$ such that $\sigma (2)=n$  so the sum is zero also over this kind of elements. Thus we have that:
\begin{align*}
&\sum_{\sigma \in D_n^{\{1\}}}{ (-1)^{\ell(\sigma)}x^{L(\sigma)}}=\sum_{\substack{\{\sigma \in  D_n^{\{1\}}: \\ \sigma(3)=n\}}}{ (-1)^{\ell(\sigma)}x^{L(\sigma)}}+\sum_{\substack{ \{\sigma \in  D_n^{\{1\}}: \\ \sigma(n)=n\}}}{ (-1)^{\ell(\sigma)}x^{L(\sigma)}} +\\
&+ \sum_{\substack{ \{\sigma \in  D_n^{\{1\}}: \\ \sigma(1)=-n\}}}{ (-1)^{\ell(\sigma)}x^{L(\sigma)}}+\sum_{\substack{ \{\sigma \in  D_n^{\{1\}}: \\ \sigma(3)=-n\}}}{ (-1)^{\ell(\sigma)}x^{L(\sigma)}}+\sum_{\substack{\{\sigma \in  D_n^{\{1\}}: \\ \sigma(n)=-n\}}}{ (-1)^{\ell(\sigma)}x^{L(\sigma)}}
\end{align*}
\begin{align*}&=\sum_{\substack{ \{\sigma \in  D_n^{\{1\}}: \\ \sigma(3)=n\}}}{ (-1)^{\ell(\sigma)}x^{L(\sigma)}}+\sum_{\sigma \in  D_{n-1}^{\{1\}}}{ (-1)^{\ell(\sigma)}x^{L(\sigma)}} + \sum_{\sigma \in  B_{n-1}\setminus D_{n-1}}{ (-1)^{\ell(\hat\sigma)}x^{L(\hat\sigma)}}+\\
&+\sum_{\substack{\{\sigma \in  D_n^{\{1\}}: \\ \sigma(3)=-n\}}}{ (-1)^{\ell(\sigma)}x^{L(\sigma)}}+\sum_{\sigma \in (B_{n-1}\setminus D_{n-1})^{\{ 1\}}}{ (-1)^{\ell(\check\sigma)}x^{L(\check\sigma)}}
\end{align*}
where $\hat\sigma:=[-n,\sigma(1),\ldots,\sigma(n-1)]$ and $\check\sigma:=[\sigma(1),\ldots,\sigma(n-1),-n]$. Now
\[\sum_{\substack{\{\sigma \in  D_n^{\{1\}}: \\ \sigma(3)=n\}}}{ (-1)^{\ell(\sigma)}x^{L(\sigma)}}=\sum_{\substack{ \{\sigma \in  D_{n-1}: \\ \sigma(1)>\sigma(2)\}}}{ (-1)^{\ell(\bar\sigma)}x^{L(\bar\sigma)}}\]
where $\bar\sigma:=[\sigma(2),\sigma(1),n,\sigma(3),\ldots,\sigma(n-1)]$. But
$\ell(\bar\sigma)= \inv(\sigma)+n-4+\nsp(\sigma)=\ell(\sigma)+n-4$, and
$L(\bar\sigma)= \oinv(\sigma)-1+\left\lceil\frac{n-3}{2}\right\rceil+ \onsp(\sigma)=L(\sigma)+\left\lceil\frac{n-5}{2}\right\rceil=L(\sigma)+m-2,$
	where $m \eqdef \left\lceil \frac{n-1}{2} \right\rceil$, so
\begin{align*} &\sum_{\substack{\{\sigma \in  D_n^{\{1\}}: \\ \sigma(3)=n\}}}{ (-1)^{\ell(\sigma)}x^{L(\sigma)}}=(-1)^n x^{m-1}\sum_{\substack{ \{\sigma \in  D_{n-1}: \\ \sigma(1)>\sigma(2)\}}}{ (-1)^{\ell(\sigma)}x^{L(\sigma)}}\\
&=(-1)^n x^{m-2}\left(\sum_{\sigma \in D_{n-1}}\f -\sum_{\sigma \in D_{n-1}^{\{1\}}} \f\right) .
\end{align*}
Similarly,
\[\sum_{\substack{ \{\sigma \in  D_n^{\{1\}}: \\ \sigma(3)=-n\}}}{ (-1)^{\ell(\sigma)}x^{L(\sigma)}}=\sum_{\substack{ \{\sigma \in B_{n-1}\setminus D_{n-1}: \\ \sigma(1)>\sigma(2)\}}}{ (-1)^{\ell(\bar{\bar{\sigma}})}x^{L(\bar{\bar{\sigma}})}}\]
where $\bar{\bar{\sigma}}:=[\sigma(2),\sigma(1),-n,\sigma(3),\ldots,\sigma(n-1)]$ and
$\ell(\bar{\bar{\sigma}})= \inv(\sigma)+1+\nsp(\sigma)+n-1=\ell(\sigma)+n$,  $L(\bar{\bar{\sigma}})= \oinv(\sigma)+ \onsp(\sigma)+1+\left\lceil\frac{n-3}{2}\right\rceil=L(\sigma)+\left\lceil\frac{n-1}{2}\right\rceil=L(\sigma)+m$.
So, by Lemma \ref{compl},
\begin{align*}
&\sum_{\substack{ \{\sigma \in  D_n^{\{1\}}: \\ \sigma(3)=-n\}}}{ (-1)^{\ell(\sigma)}x^{L(\sigma)}}=(-1)^n x^{m} \sum_{\substack{ \{\sigma \in B_{n-1}\setminus D_{n-1}: \\ \sigma(1)>\sigma(2)\}}}{ (-1)^{\ell(\sigma)}x^{L(\sigma)}}\\
&=(-1)^{n}x^{m}\left(\sum_{\sigma\in B_{n-1} \setminus D_{n-1}}{(-1)^{\ell(\sigma)}x^{L(\sigma)}}-\!\!\!\!\!\sum_{ (B_{n-1} \setminus D_{n-1})^{\{ 1\}}}{\!\!\! (-1)^{\ell(\sigma)}x^{L(\sigma)}}\right)\\
&=(-1)^{n}x^{m}\left(\sum_{\sigma \in D_{n-1}}{(-1)^{\ell(\sigma)}x^{L(\sigma)}}-\sum_{ \sigma \in D_{n-1}^{\{1\}}}{ (-1)^{\ell(\sigma)}x^{L(\sigma)}}\right).
\end{align*}
Moreover, by  \eqref{ha} and \eqref{che} we have
\[\sum_{\sigma \in B_{n-1}\setminus D_{n-1}} {(-1)^{\ell(\hat\sigma)}x^{L(\hat\sigma)}}=(-1)^{n-1}x^m \sum_{\sigma \in B_{n-1}\setminus D_{n-1}} {(-1)^{\ell(\sigma)}x^{L(\sigma)}}\]
and
\[
\sum_{\sigma \in (B_{n-1} \setminus D_{n-1})^{\{ 1\}}} {(-1)^{\ell(\check\sigma)}x^{L(\check\sigma)}}=x^{2m} \sum_{\sigma \in (B_{n-1}\setminus D_{n-1})^{\{ 1 \}}} {(-1)^{\ell(\sigma)}x^{L(\sigma)}}.
\]

Thus we get, again  by Lemma \ref{compl}, 
\small
\begin{eqnarray*}
\sum_{\sigma \in D_n^{\{1\}}}{ (-1)^{\ell(\sigma)}x^{L(\sigma)}}&=&
(-1)^n x^{m-2}\left(\sum_{\sigma \in  D_{n-1}}{ (-1)^{\ell(\sigma)}x^{L(\sigma)}}-\sum_{\sigma \in  D_{n-1}^{\{1\}}}{ (-1)^{\ell(\sigma)}x^{L(\sigma)}}\right)\\
&+&\sum_{\sigma \in  D_{n-1}^{\{1\}}}{ (-1)^{\ell(\sigma)}x^{L(\sigma)}}+(-1)^{n-1}x^{m}\sum_{\sigma \in  D_{n-1}}{ (-1)^{\ell(\sigma)}x^{L(\sigma)}}\\
&+&(-1)^{n}x^{m}\left(\sum_{\sigma \in D_{n-1}}{(-1)^{\ell(\sigma)}x^{L(\sigma)}}-\sum_{\sigma \in  D_{n-1}^{\{1\}}}{ (-1)^{\ell(\sigma)}x^{L(\sigma)}}\right) \\
&+&x^{2m}\sum_{\sigma \in D_{n-1}^{\{ 1 \}}}{ \!\!\!(-1)^{\ell(\sigma)}x^{L(\sigma)}}\\
&=&(-1)^n x^{m-2}\sum_{\sigma \in  D_{n-1}}{ (-1)^{\ell(\sigma)}x^{L(\sigma)}}\\
&+&\left(1+(-1)^{n-1}x^{m-2}+(-1)^{n-1}x^{m}+x^{2m}\right)\sum_{\sigma\in D^{\{1\}}_{n-1}}{(-1)^{\ell(\sigma)}x^{L(\sigma)}}
\end{eqnarray*}

\normalsize
and the result follows by Theorem \ref{sq} and our induction hypothesis.
\end{proof}

We note the following consequence of Theorems \ref{sq} and \ref{maxquod}.
\begin{cor}
Let $n \in \mathbb{P}$, $n \geq 3$, and $i \in [0,n-1]$. Then
\[ \sum _{\sigma \in D_{n}}(-1)^{\ell (\sigma )} \, x^{L(\sigma )}=(1-x^{2}) \, \sum _{\sigma
\in D_{n}^{\{ i\}}}(-1)^{\ell (\sigma )} \, x^{L(\sigma )} . \]
\end{cor}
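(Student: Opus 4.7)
The plan is to deduce this corollary directly by comparing the two closed-form product formulas supplied by Theorems \ref{sq} and \ref{maxquod}. No further combinatorial work should be needed, only factoring out the low-index terms of the product from Theorem \ref{sq}.

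More precisely, Theorem \ref{sq} asserts
\[
\sum_{\sigma \in D_n} (-1)^{\ell(\sigma)} x^{L(\sigma)} = \prod_{j=2}^{n} \left(1+(-1)^{j-1} x^{\left\lfloor j/2 \right\rfloor}\right)^{2},
\]
and the first two factors of this product, corresponding to $j=2$ and $j=3$, are $(1-x)^{2}$ and $(1+x)^{2}$ respectively, whose product is $(1-x^{2})^{2}$. Hence
\[
\sum_{\sigma \in D_n} (-1)^{\ell(\sigma)} x^{L(\sigma)} = (1-x^{2})^{2} \prod_{j=4}^{n} \left(1+(-1)^{j-1} x^{\left\lfloor j/2 \right\rfloor}\right)^{2}.
\]
On the other hand, Theorem \ref{maxquod} says that for any $i \in [0,n-1]$,
\[
\sum_{\sigma \in D_n^{\{i\}}} (-1)^{\ell(\sigma)} x^{L(\sigma)} = (1-x^{2}) \prod_{j=4}^{n} \left(1+(-1)^{j-1} x^{\left\lfloor j/2 \right\rfloor}\right)^{2},
\]
so multiplying both sides by $(1-x^{2})$ gives exactly the same expression.

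There is no real obstacle: once the two closed forms are on the page, the claim is a one-line algebraic identity. The only minor point to verify is the boundary case $n=3$, where the product $\prod_{j=4}^{n}(\cdots)$ is empty; in this case the left-hand side of Theorem \ref{sq} is $(1-x^{2})^{2}$ and the right-hand side of Theorem \ref{maxquod} is $(1-x^{2})$, and the identity $(1-x^{2})^{2} = (1-x^{2}) \cdot (1-x^{2})$ confirms the corollary here as well.
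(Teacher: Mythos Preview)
Your proof is correct and follows the same approach as the paper: the paper's proof simply reads ``This follows immediately from Theorems \ref{sq} and \ref{maxquod},'' and your argument spells out exactly the algebraic comparison of the two product formulas that makes this immediate.
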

\begin{proof}
This follows immediately from Theorems \ref{sq} and \ref{maxquod}. 
\end{proof}

The results obtained up to now compute
$\sum_{\sigma \in D_{n}^{I}} (-1)^{\ell (\sigma )} x^{L(\sigma )}$ when $|I| \leq 1$.
A natural next step is to try to compute these generating functions if $|I \setminus \{ 0 \}| \leq 1$. We are able to do this for $I=\{ 0,1 \}$, and $I=\{ 0,2 \}$. 
The computation
for $I=\{ 0,2 \}$ follows easily from results that we have already obtained.

\begin{cor}
\label{pro02}
Let $n \in {\mathbb P}$, $n \geq 4$. Then 
\[\sum_{\sigma \in
D_{n}^{\{ 0, \, 2 \}}}{(-1)^{\ell(\sigma)}x^{L(\sigma)}} = 
(1-x^2) \prod_{j=4}^n (1+(-1)^{j-1} x^{\left\lfloor\frac{j}{2} \right\rfloor})^2.
\]
\end{cor}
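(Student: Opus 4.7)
The plan is to reduce the corollary to Theorem \ref{maxquod} by establishing
\[
\sum_{\sigma \in D_n^{\{0,2\}}}(-1)^{\ell(\sigma)}x^{L(\sigma)}
= \sum_{\sigma \in D_n^{\{1\}}}(-1)^{\ell(\sigma)}x^{L(\sigma)} .
\]
First, Proposition \ref{zerouno} applied with $I=\{2\}$ immediately gives
$\sum_{\sigma \in D_n^{\{0,2\}}}(-1)^\ell x^L = \sum_{\sigma \in D_n^{\{1,2\}}}(-1)^\ell x^L$. Since $D_n^{\{1,2\}} \subseteq D_n^{\{1\}}$, it therefore suffices to show that the signed sum over the complementary set $X \eqdef \{\sigma\in D_n^{\{1\}} : \sigma(2)>\sigma(3)\}$ vanishes; the formula will then follow from Theorem \ref{maxquod}.

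For this I would define the involution $\varphi: X \to X$ by $\varphi(\sigma)\eqdef \sigma \cdot (1,3)(-1,-3)$, i.e., right multiplication by the element of $D_n$ exchanging values at positions $\pm 1$ and $\pm 3$. The conditions defining $X$ are preserved: $\varphi(\sigma)(1)=\sigma(3)<\sigma(2)=\varphi(\sigma)(2)$ since $\sigma(2)>\sigma(3)$, and $\varphi(\sigma)(2)=\sigma(2)>\sigma(1)=\varphi(\sigma)(3)$ since $\sigma(1)<\sigma(2)$; there are no fixed points, as $\sigma(1)\neq\sigma(3)$. A direct inspection shows that swapping $\sigma(1)$ and $\sigma(3)$ alters $\inv$ only at the pair $(1,3)$, and leaves $\nsp$ invariant, because the changes coming from the pairs $(1,2),(2,3)$ and from the pairs $(1,k),(3,k)$ with $k\geq 4$ cancel pairwise (the corresponding sums being symmetric in $\sigma(1)$ and $\sigma(3)$). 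Hence $\ell(\varphi(\sigma))=\ell(\sigma)\pm 1$.

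The crucial and subtlest step is the check that $L(\varphi(\sigma)) = L(\sigma)$. Here the main obstacle — and the reason for routing the argument through $D_n^{\{1,2\}}$ rather than attacking $D_n^{\{0,2\}}$ directly — is the parity bookkeeping: since positions $1$ and $3$ share the same (odd) parity, the pair $(1,3)$ is \emph{not} an odd-parity pair and therefore contributes neither to $\oinv$ nor to $\onsp$, so the single inversion flip at $(1,3)$ is invisible to $L$. The remaining odd-parity pairs affected by $\varphi$ — namely $(1,2)$, $(2,3)$, and the pairs $(1,k),(3,k)$ with $k\geq 4$ even — have their total contributions preserved by the same symmetry in $\sigma(1)$ and $\sigma(3)$. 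Thus $L$ is invariant while $(-1)^\ell$ changes sign, paired terms cancel, $\sum_{\sigma\in X}(-1)^{\ell(\sigma)}x^{L(\sigma)}=0$, and invoking Theorem \ref{maxquod} completes the proof.
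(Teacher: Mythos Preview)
Your proof is correct and follows essentially the same route as the paper: apply Proposition~\ref{zerouno} to pass from $D_n^{\{0,2\}}$ to $D_n^{\{1,2\}}$, then reduce to $D_n^{\{1\}}$ and invoke Theorem~\ref{maxquod}. The only difference is that where the paper cites Proposition~\ref{shd} for the step $D_n^{\{1,2\}}\to D_n^{\{1\}}$, you supply the underlying sign-reversing involution $\sigma\mapsto\sigma(1,3)(-1,-3)$ directly (this is exactly the map used inside the proof of Theorem~\ref{maxquod}, and is the special case of the compression argument behind Proposition~\ref{shd}).
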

\begin{proof}
By Proposition \ref{zerouno}  and Proposition \ref{shd} we have
\[\sum_{\sigma \in
D_{n}^{\{ 0,2 \}}}{(-1)^{\ell(\sigma)}x^{L(\sigma)}} =\sum_{\sigma \in
D_{n}^{\{ 1,2 \}}}{(-1)^{\ell(\sigma)}x^{L(\sigma)}} = \sum_{\sigma \in
D_{n}^{\{ 1 \}}}{(-1)^{\ell(\sigma)}x^{L(\sigma)}},   \]
and the result follows by Theorem \ref{maxquod}.
\end{proof}

We conclude this section by computing
$\sum_{\sigma \in D_{n}^{I}} (-1)^{\ell (\sigma )} x^{L(\sigma )}$ when $I =\{ 0,1 \}$.

\begin{thm}
\label{pro01}
Let $n \in {\mathbb P}$, $n \geq 3$. Then 
\[\sum_{\sigma \in
D_{n}^{\{ 0, \, 1 \}}}{(-1)^{\ell(\sigma)}x^{L(\sigma)}} =
(1+x^2) \prod_{j=4}^n (1+(-1)^{j-1} x^{\left\lfloor\frac{j}{2} \right\rfloor})^2.
\]
\end{thm}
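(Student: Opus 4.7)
The plan is to proceed by induction on $n \geq 3$, closely paralleling the proof of Theorem \ref{maxquod}. Set $f_n := \sum_{\sigma \in D_n^{\{0,1\}}}(-1)^{\ell(\sigma)}x^{L(\sigma)}$, denote by $h_n$ the sum appearing in Theorem \ref{maxquod} (the maximal quotient sum), and let $P_n := \prod_{j=4}^{n}(1+(-1)^{j-1}x^{\lfloor j/2\rfloor})^2$, so the target identity is $f_n = (1+x^2)P_n$. The base case $n=3$ is handled by direct enumeration of the six elements of $D_3^{\{0,1\}}$ (namely $[1,2,3]$, $[-1,2,-3]$, $[1,3,2]$, $[-1,3,-2]$, $[2,3,1]$, $[-2,3,-1]$), whose contributions sum to $1+x^2$.

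For the inductive step ($n \geq 4$), Lemma \ref{zerod} applied with $I=\{0,1\}$ forces $a \geq 4$, so $\sigma$ with $\pm n$ in positions $4,\ldots,n-1$ contribute nothing; moreover, the quotient condition $|\sigma(1)| < \sigma(2)$ directly excludes $\sigma(1) = \pm n$ and $\sigma(2) = -n$. Five cases remain: (A) $\sigma(2)=n$; (B) $\sigma(3)=n$; (C) $\sigma(3)=-n$; (D) $\sigma(n)=n$; (E) $\sigma(n)=-n$. Case (A) contributes $0$ via the involution $\sigma \mapsto \sigma(1,3)(-1,-3)$: once $\sigma(2)=n$ is fixed, the swap is an involution of the set in question, preserves $L$ (the roles of positions $1$ and $3$ permute pairwise in the $\oinv$ and $\onsp$ counts while the pairs $(1,2)$ and $(2,3)$ involving $\sigma(2)=n$ give unchanged contributions), and flips the parity of $\ell$. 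Cases (D) and (E) together yield $(1+x^{2m}) f_{n-1}$ by the standard truncation of the last entry combined with Lemma \ref{compl}, where $m := \lfloor n/2 \rfloor$.

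For (B) and (C), I use the same bijections employed in the proof of Theorem \ref{maxquod}: $\tau \mapsto [\tau(2),\tau(1),n,\tau(3),\ldots,\tau(n-1)]$ identifies case (B) with $\{\tau \in D_{n-1} : |\tau(2)| < \tau(1)\}$, and the analogue with $-n$ handles case (C) with $\tau \in B_{n-1} \setminus D_{n-1}$. The $\ell$- and $L$-difference formulas derived there ($\ell$ increases by $n-4$ and $L$ by $m-2$ in (B); by $n$ and $m$ in (C)) apply unchanged since they depend only on $\tau(1) > \tau(2)$, which is implied by $|\tau(2)| < \tau(1)$. Left multiplication by $s_0$ preserves the condition $|\tau(2)| < \tau(1)$ (this condition forces $\tau(1) \geq 2$, so $s_0$ never touches $\tau(1)$ and only toggles the sign of $\tau(2)$ when $|\tau(2)|=1$), so Lemma \ref{compl} identifies the $B_{n-1} \setminus D_{n-1}$-sum with the $D_{n-1}$-sum. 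Both cases collapse to the common quantity $Q_{n-1} := \sum_{\tau \in D_{n-1},\,|\tau(2)|<\tau(1)} (-1)^{\ell(\tau)} x^{L(\tau)}$, and $(B)+(C) = (-1)^n (x^{m-2}+x^m) Q_{n-1}$.

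The substantial step, which I expect to be the main obstacle, is the identification $Q_{n-1} = h_{n-1} - f_{n-1}$. To establish it, I decompose $D_{n-1}$ into the four cosets of the parabolic subgroup $\langle s_0^D, s_1 \rangle$: the piece $A_1 = D_{n-1}^{\{0,1\}}$ (with sum $f_{n-1}$), $A_2 = \{\tau : |\tau(2)| < \tau(1)\}$ (with sum $Q_{n-1}$), and two further cosets $A_3, A_4$ on which $\tau(1) + \tau(2) < 0$. A direct check of descents gives $D_{n-1}^{\{1\}} = A_1 \sqcup A_3$ and $D_{n-1}^{\{0\}} = A_1 \sqcup A_2$; by Proposition \ref{zerouno} both quotient sums equal $h_{n-1}$, so the sums over $A_2$ and $A_3$ coincide, whence $Q_{n-1} = h_{n-1} - f_{n-1}$. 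Substituting $f_{n-1} = (1+x^2) P_{n-1}$ (induction hypothesis) and $h_{n-1} = (1-x^2) P_{n-1}$ (Theorem \ref{maxquod}) gives $h_{n-1} - f_{n-1} = -2x^2 P_{n-1}$, and the recursion collapses algebraically to $f_n = (1 + (-1)^{n-1} x^m)^2 (1+x^2) P_{n-1} = (1+x^2) P_n$, completing the induction.
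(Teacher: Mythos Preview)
Your proof is correct and follows the same inductive architecture as the paper's: reduction via Lemma~\ref{zerod}, the involution killing the $\sigma(2)=n$ case, truncation for $\sigma(n)=\pm n$, and bijections for $\sigma(3)=\pm n$. The one genuine variation is in how you handle $\sigma(3)=\pm n$. The paper uses the sign-flipping bijection $\sigma\mapsto[-\sigma(2),-\sigma(1),\pm n,\sigma(3),\ldots]$, which lands directly in $D_{n-1}^{\{1\}}\setminus D_{n-1}^{\{0,1\}}$ (resp.\ its complement), so the sum is immediately $h_{n-1}-f_{n-1}$; the paper then recomputes the $\ell$- and $L$-shifts from scratch for this new map. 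You instead reuse the unflipped bijection from Theorem~\ref{maxquod}, landing in $A_2=\{\tau:|\tau(2)|<\tau(1)\}=D_{n-1}^{\{0\}}\setminus D_{n-1}^{\{0,1\}}$, and then invoke Proposition~\ref{zerouno} (equivalently, Theorem~\ref{maxquod} at $i=0$) to identify this sum with $h_{n-1}-f_{n-1}$. Your route recycles more of the earlier computations (the $\ell$- and $L$-shifts carry over verbatim since $|\tau(2)|<\tau(1)$ implies $\tau(1)>\tau(2)$) at the cost of the extra coset bookkeeping; the paper's route avoids the coset argument but redoes the shift calculation. Both reach the same recursion.
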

\begin{proof}
 We proceed by induction on $n \geq 3$.
  By Lemma \ref{zerod} we have that the sum over $\sigma\in D^{\{0,\,1\}}_n$ such that $n$ or $-n$ appear in the window in any position but $1,2,3$, or $n$ is zero; moreover for $\sigma \in D_n^{\{0,\,1\}}$ we always have $\sigma^{-1}(\pm n)\neq 1$ and $\sigma^{-1}(-n)\neq 2$. Also, the map $\sigma
\mapsto \sigma (1,3) (-1,-3)$ is a bijection of $\{ \sigma \in D_{n}^{\{ 0,\,1\} }: \, \sigma (2)
=n \}$ in itself, so by the same argument as in the proof of Theorem \ref{maxquod}  the sum over this kind of elements  is zero. Thus we have that:
\begin{align*}
\sum_{\sigma \in D_n^{\{0,\,1\}}}{ (-1)^{\ell(\sigma)}x^{L(\sigma)}}&=
\sum_{\substack{\{\sigma \in  D_n^{\{0,\,1\}}: \\ | \sigma(3) |=n\}}}{ (-1)^{\ell(\sigma)}x^{L(\sigma)}}+
\sum_{\substack{ \{\sigma \in  D_n^{\{0,\,1\}}: \\ | \sigma(n) |=n\}}}{ (-1)^{\ell(\sigma)}x^{L(\sigma)}}.
\end{align*}
By \eqref{che} and Lemma \ref{compl} we have that
\begin{align*}
\sum_{\substack{ \{\sigma \in  D_n^{\{0,\,1\}}: \\ | \sigma(n) |= n\}}}{ (-1)^{\ell(\sigma)}x^{L(\sigma)}} & = \left(1+x^{2\left\lfloor\frac{n}{2}\right\rfloor}\right)\sum_{\sigma \in  D_{n-1}^{\{0,\,1\}}}{ (-1)^{\ell(\sigma)}x^{L(\sigma)}} \\
& = \left(1+x^{2\left\lfloor\frac{n}{2}\right\rfloor}\right) (1+x^2) \prod_{j=4}^{n-1} (1+(-1)^{j-1} x^{\left\lfloor\frac{j}{2} \right\rfloor})^2
\end{align*}
by our induction hypothesis. Moreover,
\[
\sum_{\substack{\{\sigma \in  D_n^{\{0,\,1\}}: \\ \sigma(3)=n\}}}{ (-1)^{\ell(\sigma)}x^{L(\sigma)}}=\sum_{\sigma \in D_{n-1}^{\{1\}}\setminus D_{n-1}^{\{0,\,1\}}}{ (-1)^{\ell(\bar\sigma)}x^{L(\bar\sigma)}}
\]
where $\bar\sigma:=[-\sigma(2),-\sigma(1),n,\sigma(3),\ldots,\sigma(n-1)]$. 
But
\begin{eqnarray*} 
\inv (\bar\sigma )& =& \inv ([n,\sigma (3), \ldots, \sigma (n-1)]) + |\{ j \in [3,n-1]: \; - \sigma (1) > \sigma (j) \} | \\ & & + |\{ j \in [3,n-1]: \; - \sigma (2)>
\sigma (j) \} |,\\
\nsp  (\bar\sigma ) &=& \nsp  ([n,\sigma (3), \ldots, \sigma (n-1)]) + 
|\{ j \in [3,n-1]: \; - \sigma (1) + \sigma (j) <0 \} | \\ & & + 
|\{ j \in [3,n-1]: \; - \sigma (2)+ \sigma (j) <0 \} |, \\
\oinv (\bar\sigma ) & = & \oinv ([n,\sigma (3), \ldots, \sigma (n-1)] ) + |\{ j \in [3,n-1]: j \equiv 0 \pmod{2}, \; \sigma (1)+\sigma (j) <0 \} | \\
& & + |\{ j \in [3,n-1]: j \equiv 1 \pmod{2}, \; \sigma (2) + \sigma (j) <0 \} | ,\\ 
\mbox{and} \quad \qquad& &
\\
 \onsp (\bar\sigma ) & = &  \onsp ([n,\sigma (3), \ldots, \sigma (n-1)] ) + |\{ j \in [3,n-1]: j \equiv 0 \pmod{2}, \; \sigma (1) > \sigma (j) \} | \\
& & + | \{ j \in [3,n-1]: j \equiv 1 \pmod{2}, \; \sigma (2) > \sigma (j) \} |. \end{eqnarray*} 
Therefore, 
$\ell(\bar\sigma)=\ell(\sigma)+n-4$, and
$L(\bar\sigma)= \oinv(\sigma)-1+\left\lceil\frac{n-3}{2}\right\rceil+ \onsp(\sigma)=L(\sigma)+m-2,$
	where $m \eqdef \left\lfloor \frac{n}{2} \right\rfloor$.
Similarly,
\[
\sum_{\substack{ \{\sigma \in  D_n^{\{0,\,1\}}: \\ \sigma(3)=-n\}}}{ (-1)^{\ell(\sigma)}x^{L(\sigma)}}=\sum_{\substack{ \{\sigma \in B_{n-1}\setminus D_{n-1}: \\ \sigma(1)<\sigma(2)<-\sigma(1)\}}}{ (-1)^{\ell(\bar{\bar{\sigma}})}x^{L(\bar{\bar{\sigma}})}}
\]
where $\bar{\bar{\sigma}}:=[-\sigma(2),-\sigma(1),-n,\sigma(3),\ldots,\sigma(n-1)]$ and
$\ell(\bar{\bar{\sigma}})=\ell(\sigma)+n$,  $L(\bar{\bar{\sigma}})= \oinv(\sigma)+ \onsp(\sigma)+1+\left\lceil\frac{n-3}{2}\right\rceil=L(\sigma)+m$.
But $\{ \sigma \in B_{n-1} \setminus D_{n-1} \, : \,\sigma (1) < \sigma (2) < - \sigma (1) \} = (B_{n-1} \setminus D_{n-1})^{ \{ 1 \} } \setminus (B_{n-1} \setminus D_{n-1})^{ \{ 0,1 \} }$, so by Lemma \ref{compl}, Theorem \ref{maxquod} and our induction hypothesis
\begin{align*}
\hspace{-2.5em}\sum_{\substack{ \{\sigma \in  D_n^{\{0,\,1\}}: \\ | \sigma(3) |= n\}}}\!\!\!{ (-1)^{\ell(\sigma)}x^{L(\sigma)}}& = (-1)^n x^ {m-2} (1+x^2) \left(\sum_{\sigma \in  D_{n-1}^{\{1\}}}{\! (-1)^{\ell(\sigma)}x^{L(\sigma)}}-\!\!\sum_{\sigma \in  D_{n-1}^{\{0,\,1\}}}\!\!{ (-1)^{\ell(\sigma)}x^{L(\sigma)}}\right)\\
&=2(-1)^{n-1}x^m (1+x^2)  \prod_{j=4}^{n-1} (1+(-1)^{j-1} x^{\left\lfloor\frac{j}{2} \right\rfloor})^2
\end{align*}

Thus
\[
\sum_{\sigma \in D_{n}^{\{ 0, \, 1 \}}}{(-1)^{\ell(\sigma)}x^{L(\sigma)}} =
(1+(-1)^{n-1}x^m)^2 (1+x^2) \prod_{j=4}^{n-1} (1+(-1)^{j-1} x^{\left\lfloor\frac{j}{2} \right\rfloor})^2 
\]

and the result follows.
\end{proof}

\section{Open problems}

In this section we present some conjectures naturally arising from the present work
and the evidence that we have in their favor.

In this paper we have given closed product formulas for $\sum_{\sigma \in D_{n}^{I}} (-1)^{\ell (\sigma )} x^{L(\sigma )}$ when $|I| \leq 1$, $I=\{0,1\}$ and $I=\{0,2\}$.

We feel that such formulas always exist. In particular, if $|I \setminus \{ 0,1 \}| \leq 1$, we feel that the following holds. For $n \in {\mathbb P}$ and $I \subseteq [0,n-1]$ 
let, for brevity, $D_{n}^{I}(x) \eqdef \sum_{\sigma \in D_{n}^{I}} (-1)^{\ell (\sigma )} x^{L (\sigma )}$.

\begin{con}
\label{con0i}
Let $n \in {\mathbb P}$, $n \geq 5$, and $i \in [3,n-1]$. Then 
\[
D_{n}^{\{ 0, \, i \}}(x) =
\prod_{j=4}^n (1+(-1)^{j-1} x^{\left\lfloor\frac{j}{2} \right\rfloor})^2.
\]
\end{con}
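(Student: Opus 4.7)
The plan is to prove the conjecture by induction on $n$, after first reducing $I$ to a canonical form. By Proposition \ref{zerouno} applied with $I' = \{ i \} \subseteq [2, n-1]$, we have $D_n^{\{0, i\}}(x) = D_n^{\{1, i\}}(x)$ for every $i \in [3, n-1]$. Since $\{ i \}$ is a singleton connected component of $\{ 1, i \}$ and $i + 2 \notin \{ 1, i \}$, Proposition \ref{shd} lets us iteratively shift $\{ i \}$ one step to the right, giving $D_n^{\{0, i\}}(x) = D_n^{\{1, n-1\}}(x)$ independently of $i$. (This also confirms that the claimed formula is $i$-independent, as it must be.) It therefore suffices to prove that $F_n(x) \eqdef D_n^{\{1, n-1\}}(x) = (1 + (-1)^{n-1} x^{\lfloor n/2 \rfloor})^2 F_{n-1}(x)$ for $n \geq 6$, with base case $n = 5$ verified by direct (finite) computation.

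For the inductive step, decompose $F_n(x)$ according to the position $a \in [n]$ at which $|\sigma(a)| = n$ occurs. Since $\delta_0(\{1, n-1\}) = 0$, Lemma \ref{zerod} makes the contribution vanish for every $a \in [4, n-4]$, and the descent constraints $\sigma(1) < \sigma(2)$ and $\sigma(n-1) < \sigma(n)$ further exclude $\sigma(1) = n$, $\sigma(2) = -n$, $\sigma(n-1) = n$, $\sigma(n) = -n$. For the two \emph{boundary} configurations $\sigma(2) = n$ and $\sigma(n-1) = -n$, the maps $\sigma \mapsto \sigma (1, 3)(-1, -3)$ and $\sigma \mapsto \sigma (n-2, n)(-(n-2), -n)$ are respectively involutions on the corresponding subsets of $D_n^{\{1, n-1\}}$; a calculation analogous to that used for $\sigma(2) = n$ in the proof of Theorem \ref{maxquod} shows each preserves $L$ while flipping the parity of $\ell$, so these cases contribute zero. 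For the two \emph{corner} configurations $\sigma(1) = -n$ and $\sigma(n) = n$, removing the first or last entry yields natural bijections with $(B_{n-1} \setminus D_{n-1})^{\{n-2\}}$ and $D_{n-1}^{\{1\}}$ respectively; applying Lemma \ref{compl}, Proposition \ref{shd}, and Theorem \ref{maxquod} together with length and odd-length shifts analogous to those in \eqref{ha} and \eqref{che} gives a combined contribution of $(1 - x^2)(1 + (-1)^{n-1} x^{\lfloor n/2 \rfloor}) F_{n-1}(x)$.

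The residual contribution, which must equal $F_{n-1}(x) (1 + (-1)^{n-1} x^{\lfloor n/2 \rfloor}) ((-1)^{n-1} x^{\lfloor n/2 \rfloor} + x^2)$, comes from the six \emph{internal} configurations $\sigma(a) = \pm n$ with $a \in \{3, n-3, n-2\}$, and is the principal difficulty. The natural deletion of $\pm n$ at an internal position $a$ shifts the positional parity of all indices beyond $a$, so that $\oinv$ and $\onsp$ do not decompose cleanly into smaller-group statistics; the deletion instead produces ``cross'' odd-parity terms involving pairs with one index in $\{1, 2\}$ (or symmetrically in $\{n-1, n\}$) and the other beyond $a$. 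The plan is to apply, within each of the six sets, a partial involution analogous to $\sigma \mapsto \sigma (a-1, a+1)(-(a-1), -(a+1))$ from the proof of Lemma \ref{zerod}, restricted to those $\sigma$ whose image remains in $D_n^{\{1, n-1\}}$, and to verify by direct computation that it preserves $L$ while flipping the parity of $\ell$. The non-cancelling remainder, consisting of those $\sigma$ on which the involution would fail to respect the quotient constraints, must then be evaluated by further decomposition and matched against the target expression using the formulas $D_{n-1}^{\{1, n-2\}}(x) = F_{n-1}(x)$ and $D_{n-1}^{\{1\}}(x) = (1 - x^2) F_{n-1}(x)$ from the inductive hypothesis and Theorem \ref{maxquod}. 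The main obstacle is the bookkeeping of signs and weight shifts for these partial involutions in the presence of two disjoint constraint regions (near positions $1, 2$ and near positions $n-1, n$), which is considerably more intricate than the single-boundary analysis in Theorem \ref{pro01}.
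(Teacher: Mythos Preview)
The statement you are attempting to prove is presented in the paper as an open \emph{conjecture} (Conjecture~\ref{con0i}), verified only computationally for $n\le 8$; the paper gives no proof, so there is nothing to compare your argument against. Your reduction to $I=\{1,n-1\}$ via Propositions~\ref{zerouno} and~\ref{shd} is correct and is exactly the reduction the authors indicate when they remark that it suffices to treat $i=3$.

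However, what you have written is not a proof but a strategy, and the part you yourself flag as ``the principal difficulty'' is not carried out. Concretely: for the internal configurations (say $\sigma(3)=\pm n$), the partial involution $\sigma\mapsto\sigma(2,4)(-2,-4)$ fails precisely when swapping would violate $\sigma(1)<\sigma(2)$, and you give no argument evaluating the signed sum over this residual set. In the paper's proofs of Theorems~\ref{maxquod} and~\ref{pro01} the analogous residual set is controlled by a single boundary constraint and reduces to a difference of already-computed sums over $D_{n-1}$ and $D_{n-1}^{\{1\}}$ (or $D_{n-1}^{\{0,1\}}$); with two separated constraint regions $\{1\}$ and $\{n-1\}$ the residual sets near position $3$ and near position $n-2$ interact with \emph{both} constraints after deletion (because deleting $\pm n$ at an interior position shifts the positional parity of the far block), and you have not shown how to express them in terms of the inductively known quantities $F_{n-1}(x)$ and $D_{n-1}^{\{1\}}(x)$. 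Saying the remainder ``must then be evaluated by further decomposition and matched against the target expression'' is precisely the step that is missing.

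There is also a small slip: for $I=\{1,n-1\}$ the condition $[a-2,a+1]\cap I=\emptyset$ in Lemma~\ref{zerod} gives $a\in[4,n-3]$, not $[4,n-4]$; hence $a=n-3$ is already handled and your ``six internal configurations'' should be four (at $a\in\{3,n-2\}$). This only helps you, but the enumeration should be corrected. Finally, your base case $n=5$ lies outside your induction (you induct for $n\ge 6$), and at $n=6$ the positions $3$ and $n-2=4$ are adjacent while $[4,n-3]=\emptyset$, so this boundary case would need separate checking even if the general step were complete.
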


\begin{con}
\label{con01i}
Let $n \in {\mathbb P}$, $n \geq 5$, and $i \in [3,n-1]$. Then 
\[
D_{n}^{\{ 0,1, \, i \}} (x) =
\frac{1+x^2}{1-x^2} \; \prod_{j=4}^n (1+(-1)^{j-1} x^{\left\lfloor\frac{j}{2} \right\rfloor})^2.
\]
\end{con}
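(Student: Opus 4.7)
The plan is to adapt the inductive strategy of Theorem \ref{pro01}. First, by iterating the shifting result (Proposition \ref{shd} and the subsequent analogue for components sufficiently far from $0$) applied to the singleton component $\{i\}$, I may assume without loss of generality that $i = n-1$: for each $i \in [3, n-2]$ one has $i > 2$ and $i+2 \notin \{0,1,i\}$, so the appropriate shifting proposition yields $D_n^{\{0,1,i\}}(x) = D_n^{\{0,1,i+1\}}(x)$.

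I then proceed by induction on $n \geq 5$, the base case $n=5$ being a direct finite verification. For $\sigma \in D_n^{\{0,1,n-1\}}$ the quotient conditions read $|\sigma(1)| < \sigma(2)$ and $\sigma(n-1) < \sigma(n)$. By Lemma \ref{zerod}, the signed sum over $\sigma(a) = \pm n$ vanishes for each $a \in [3,n-1]$ with $[a-2,a+1] \cap \{0,1,n-1\} = \emptyset$, which leaves only $a \in \{3, n-2, n-1\}$; together with the unconstrained positions $1, 2, n$ this restricts $\pm n$ to the positions $\{1, 2, 3, n-2, n-1, n\}$. The quotient inequalities further exclude $\sigma(1) = \pm n$, $\sigma(2) = -n$, $\sigma(n-1) = n$, and $\sigma(n) = -n$, and the case $\sigma(2) = n$ is killed by the involution $\sigma \mapsto \sigma(1,3)(-1,-3)$ exactly as in the proofs of Theorems \ref{maxquod} and \ref{pro01} (since $n - 1 \geq 4$ the involution fixes positions $\geq 4$ and so preserves membership in the quotient).

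The remaining six cases $\sigma(n) = n$, $\sigma(n-1) = -n$, $\sigma(3) = \pm n$, $\sigma(n-2) = \pm n$ are each handled by extracting $\pm n$ from the window (possibly after a preliminary swap, as with the $\bar\sigma$ and $\bar{\bar\sigma}$ maps in the proof of Theorem \ref{pro01}) and identifying the residual element with a member of an appropriate quotient of $D_{n-1}$ or $B_{n-1} \setminus D_{n-1}$, while computing the changes in $\ell$ and $L$ explicitly from the extracted entry. By Lemma \ref{compl} the ``$B \setminus D$'' sums can be rewritten as sums over $D_{n-1}$-quotients, so every contribution reduces to a known signed generating function over $D_{n-1}$ (Theorem \ref{sq}), over $D_{n-1}^{\{0,1\}}$ (Theorem \ref{pro01}), over $D_{n-1}^{\{n-2\}}$ (Theorem \ref{maxquod}), or over $D_{n-1}^{\{0,1,n-2\}}$ (the induction hypothesis).

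The crux is the concluding algebraic identity: the six contributions must combine, via $1 + 2(-1)^{n-1} x^{\lfloor n/2 \rfloor} + x^{2\lfloor n/2 \rfloor} = (1 + (-1)^{n-1} x^{\lfloor n/2 \rfloor})^2$ and a symmetric pairing of the $\sigma(3) = \pm n$ terms with the $\sigma(n-2) = \pm n$ terms, to yield $(1 + (-1)^{n-1} x^{\lfloor n/2 \rfloor})^2 \cdot D_{n-1}^{\{0,1,n-2\}}(x)$, which by induction matches the target $\frac{1+x^2}{1-x^2} \prod_{j=4}^{n}(1 + (-1)^{j-1} x^{\lfloor j/2 \rfloor})^2$. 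This bookkeeping, while mechanical, is the real work; additional care is required at $n = 5$, where positions $3$ and $n-2$ coincide and should be dealt with in the base case.
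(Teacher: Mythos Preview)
The paper does not prove this statement: it is Conjecture \ref{con01i}, listed among the open problems and only verified computationally for $n\leq 8$. So there is no ``paper's own proof'' to compare against, and your proposal is an attempt to settle an open conjecture.

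The strategy you outline has a genuine gap in the bookkeeping of the $\sigma(3)=\pm n$ cases. With $I=\{0,1,n-1\}$ and using the $\bar\sigma$, $\bar{\bar\sigma}$ maps from the proof of Theorem \ref{pro01} (which are the maps needed to keep the parity of position-differences, and hence $L$, under control), the residual element is a $\sigma\in D_{n-1}$ (resp.\ $\sigma\in B_{n-1}\setminus D_{n-1}$) satisfying both $\sigma(1)<\sigma(2)$, $-\sigma(2)\geq\sigma(1)$ \emph{and} the transported right-end condition $\sigma(n-2)<\sigma(n-1)$. In other words, the residual set is
\[
D_{n-1}^{\{1,\,n-2\}}\setminus D_{n-1}^{\{0,1,\,n-2\}}
\]
(and its $B_{n-1}\setminus D_{n-1}$ analogue). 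The second piece is your induction hypothesis, but the first piece $D_{n-1}^{\{1,\,n-2\}}(x)$ is \emph{not} among the quantities you list as known. By Proposition \ref{zerouno} it equals $D_{n-1}^{\{0,\,n-2\}}(x)$, which is precisely Conjecture \ref{con0i}. The coefficients of this term in cases $\sigma(3)=n$ and $\sigma(3)=-n$ are $(-1)^n x^{m-2}$ and $(-1)^n x^{m}$ respectively (with $m=\lfloor n/2\rfloor$), so they add rather than cancel; there is no way to eliminate $D_{n-1}^{\{1,\,n-2\}}(x)$ by combining with the other four cases, which only involve $D_{n-1}^{\{0,1\}}$ and $D_{n-1}^{\{0,1,\,n-2\}}$.

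Thus your argument, as it stands, at best reduces Conjecture \ref{con01i} to Conjecture \ref{con0i}, which is also open. (The alternative of using the simpler extraction $\tau=[\sigma(1),\sigma(2),\sigma(4),\ldots,\sigma(n)]$ for $\sigma(3)=n$ does land in $D_{n-1}^{\{0,1,n-2\}}$, but the position shift by one for entries in slots $\geq 4$ changes the parity of $j-i$ for cross-pairs, so $L$ is no longer controlled.) A simultaneous induction proving both conjectures together might work, but that is a substantially larger undertaking than what you have sketched.
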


We have verified these conjectures for $n \leq 8$. Note that, by Proposition \ref{shd}, it
is enough to prove Conjectures \ref{con0i} and \ref{con01i} for $i=3$.

Note that, by Theorem \ref{Aquot}, Conjectures 
\ref{con0i} and \ref{con01i} may
be formulated in the following equivalent way. For $n \in {\mathbb P}$ and $J \subseteq [n-1]$ 
let $S_{n}^{J}(x) \eqdef \sum_{\sigma \in S_{n}^{J}} (-1)^{\ell_A (\sigma )} x^{L_{A}(\sigma )}$.

\begin{con}
\label{con0ib}
Let $n \in {\mathbb P}$, $n \geq 5$, and $i \in [3,n-1]$. Then 
\[
D_{n}^{\{ 0, \, i \}} (x) = (S_n^{ \{ i \} } (x))^2.
\]
\end{con}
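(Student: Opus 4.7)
The plan is to first observe that Conjectures \ref{con0i} and \ref{con0ib} are equivalent: applying Theorem \ref{Aquot} with $I = \{i\}$ (a single component of size $1$, so $s = 1$, $m = 1$, and the $x^2$-multinomial coefficient equals $1$) yields
\[ S_n^{\{i\}}(x) = \prod_{k=4}^n \bigl(1 + (-1)^{k-1} x^{\lfloor k/2 \rfloor}\bigr), \]
so proving Conjecture \ref{con0i} suffices. By the shifting proposition right after Proposition \ref{shd}, applied to the component $\{i\}$ (legitimate since $i > 2$), I may reduce to $i = 3$.

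I would then proceed by induction on $n \geq 5$, mimicking the proofs of Theorems \ref{sq}, \ref{maxquod}, and \ref{pro01}, aiming to establish
\[ D_n^{\{0,3\}}(x) = \bigl(1 + (-1)^{n-1} x^{\lfloor n/2 \rfloor}\bigr)^2 D_{n-1}^{\{0,3\}}(x). \]
The base case $n = 5$ can be checked directly. For the inductive step, decompose the sum by the position $a$ with $|\sigma(a)| = n$. The quotient conditions $-\sigma(2) < \sigma(1)$ and $\sigma(3) < \sigma(4)$ rule out $\sigma(1) = -n$, $\sigma(2) = -n$, $\sigma(3) = n$, and $\sigma(4) = -n$, while Lemma \ref{zerod} annihilates $\sigma(a) = \pm n$ for $a \in [6, n-1]$ because $[a-2, a+1] \cap \{0,3\} = \emptyset$ in that range. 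The surviving positions are $\sigma(1) = n$, $\sigma(2) = n$, $\sigma(3) = -n$, $\sigma(4) = n$, $\sigma(5) = \pm n$, and $\sigma(n) = \pm n$.

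The case $\sigma(n) = \pm n$ reduces, via truncation of the last entry and Lemma \ref{compl}, to $(1 + x^{2m}) D_{n-1}^{\{0,3\}}(x)$, where $m = \lfloor n/2 \rfloor$. For $\sigma(3) = -n$ and $\sigma(4) = n$ I would remove $\pm n$ from its position and track the induced changes in $\inv$, $\nsp$, $\oinv$, and $\onsp$ as in the proofs of Theorems \ref{maxquod} and \ref{pro01}, reducing to signed sums over $D_{n-1}^{\{0\}}$ or $(B_{n-1}\setminus D_{n-1})^{\{0\}}$, both evaluable via Theorem \ref{maxquod} combined with Proposition \ref{zerouno} and Lemma \ref{compl}. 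The cases $\sigma(1) = n$ and $\sigma(2) = n$ are handled analogously, linked to one another via the bijection $\sigma \mapsto \sigma s_1$; one must carefully carry through the parity shift arising from removing the first window entry.

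The hard part will be the case $\sigma(5) = \pm n$, which escapes Lemma \ref{zerod} precisely because $3 \in [3,6] \cap \{0,3\}$. My plan is to split this sub-sum by the relative order of $\sigma(3), \sigma(4), \sigma(6)$. When $\sigma(3) < \sigma(6)$, the involution $\sigma \mapsto \sigma \cdot (4,6)(-4,-6)$ stays in the quotient (the condition $\sigma(3) < \sigma(4)$ becomes $\sigma(3) < \sigma(6)$, valid by assumption), preserves $L$ (positions $4$ and $6$ share the same parity, so the swap does not alter the parity structure of contributions to $\oinv$ and $\onsp$, and since $|\sigma(5)| = n$ is extremal its pair-contributions are unchanged), and flips the parity of $\ell$ (multiplication by a reflection in $D_n$), so this portion of the sum vanishes. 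The remaining sub-case $\sigma(6) < \sigma(3)$ resists the standard technique; I expect to need a genuinely new involution -- candidates include compositions such as $\sigma \mapsto \sigma \cdot (3,6)(-3,-6)(4,6)(-4,-6)$, or a bijective pairing with the cases $\sigma(3) = -n$ or $\sigma(4) = n$ -- or an explicit reduction to a smaller quotient sum covered by the inductive hypothesis. Assembling all contributions so as to produce the factor $(1 + (-1)^{n-1} x^{\lfloor n/2 \rfloor})^2$ will require a careful tracking of signs and powers, and this is where I anticipate the technical heart of the proof to lie.
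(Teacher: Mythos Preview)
The statement you are attempting is Conjecture~\ref{con0ib}, which the paper presents in the ``Open problems'' section and does \emph{not} prove; the paper only records that it has been verified for $n\le 8$ and observes (exactly as you do) that, via Theorem~\ref{Aquot}, it is equivalent to Conjecture~\ref{con0i}, and that by the shifting proposition it suffices to treat $i=3$. So there is no proof in the paper to compare against: your preliminary reductions coincide with everything the paper says about this statement, and from that point on you are on your own.

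Your proposal is not a proof but a plan, and you correctly identify where it breaks. The decomposition by the position of $\pm n$, the exclusions forced by the descent conditions at $0$ and $3$, and the application of Lemma~\ref{zerod} for $a\in[6,n-1]$ are all sound. The involution $\sigma\mapsto\sigma\,(4,6)(-4,-6)$ on $\{\sigma\in D_n^{\{0,3\}}:|\sigma(5)|=n,\ \sigma(3)<\sigma(6)\}$ does preserve $L$ and flip the parity of $\ell$, as you claim. However, the residual case $|\sigma(5)|=n$ with $\sigma(6)<\sigma(3)<\sigma(4)$ is a genuine obstruction: none of the local involutions used elsewhere in the paper (swapping two positions of the same parity across an extremal entry) applies here, because the descent condition at $3$ couples positions of opposite parity on one side of position~$5$. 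Your suggested candidates (a composite such as $(3,6)(-3,-6)(4,6)(-4,-6)$, or a pairing with the $\sigma(3)=-n$ or $\sigma(4)=n$ strata) do not obviously preserve $L$ while reversing the sign, and you do not verify that they do. In addition, the cases $\sigma(1)=n$, $\sigma(2)=n$, $\sigma(3)=-n$, $\sigma(4)=n$ each require a careful reduction to a quotient of $D_{n-1}$ or $B_{n-1}\setminus D_{n-1}$ whose signed generating function is already known; you sketch this but do not carry it out, and in particular the $\sigma(2)=n$ and $\sigma(4)=n$ cases involve removing an entry from a middle position, which (unlike positions $1$ and $n$) alters the parity pattern of the remaining indices and is not handled by any lemma in the paper.

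In short: the approach is natural and mirrors the paper's successful arguments for $|I|\le 1$ and $I=\{0,1\},\{0,2\}$, but the gap you flag at $|\sigma(5)|=n$ is exactly the obstacle that keeps this a conjecture, and nothing in your proposal closes it.
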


\begin{con}
\label{con01ib}
Let $n \in {\mathbb P}$, $n \geq 5$, and $i \in [3,n-1]$. Then 
\[
D_{n}^{\{ 0,1,i \}} (x) =
(1-x^4) \, (S_n^{ \{ 1, \, i \} } (x))^2.
\]
\end{con}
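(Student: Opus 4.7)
The plan is to prove the equivalent Conjecture \ref{con01i}, which asserts
\[
D_n^{\{0,1,i\}}(x) \;=\; \frac{1+x^2}{1-x^2} \prod_{j=4}^n \bigl(1+(-1)^{j-1} x^{\lfloor j/2 \rfloor}\bigr)^2
\]
for $n \geq 5$ and $i \in [3, n-1]$. First I would iterate the shifting property for connected components sufficiently far from $0$ (the unlabeled proposition stated right after Proposition \ref{shd}) to reduce to the case $i = 3$, and then induct on $n$, with the base case $n = 5$ handled by direct enumeration.

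For the inductive step I would split the sum over $D_n^{\{0,1,3\}}$ according to the position and sign of $\pm n$ in the window notation. The descent conditions at $s_0, s_1, s_3$ rule out $\sigma^{-1}(n) \in \{1,3\}$ and $\sigma^{-1}(-n) \in \{1,2,4\}$, and Lemma \ref{zerod} (applied with $a \in [6, n-1]$, for which $[a-2, a+1] \cap \{0,1,3\} = \emptyset$) annihilates the contributions from those positions. The surviving cases are $\sigma(a) = n$ for $a \in \{2,4,5,n\}$ and $\sigma(a) = -n$ for $a \in \{3,5,n\}$. Following the strategy in the proofs of Theorems \ref{maxquod} and \ref{pro01}, I would first seek $L$-preserving, $\ell$-parity-reversing involutions. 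The map $\sigma \mapsto \sigma(3,5)(-3,-5)$ is a self-bijection of $\{\sigma \in D_n^{\{0,1,3\}} : \sigma(4) = n\}$: it leaves the descents at $s_0, s_1, s_3$ untouched (positions $1,2,4$ are fixed), it preserves $L$ (swapping two odd-indexed positions redistributes the pairs contributing to $\oinv$ and $\onsp$ among themselves, and the exceptional pairs $(3,4)$ and $(4,5)$ are controlled because $\sigma(4) = n$ dominates), and since $(3,5)(-3,-5) \in D_n$ has length $3$ it flips the parity of $\ell$; hence this subsum vanishes. Analogously, $\sigma \mapsto \sigma(1,3)(-1,-3)$ restricted to $\{\sigma \in D_n^{\{0,1,3\}} : \sigma(2) = n, \, \sigma(1) < \sigma(4)\}$ is an $L$-preserving, $\ell$-parity-reversing involution annihilating that portion of the $\sigma(2) = n$ case. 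Finally, the case $\sigma(n) = \pm n$ contributes $\bigl(1 + x^{2\lfloor n/2 \rfloor}\bigr) D_{n-1}^{\{0,1,3\}}(x)$ via the identities $\ell(\check\sigma) = \ell(\sigma) + 2(n-1)$ and $L(\check\sigma) = L(\sigma) + 2\lfloor n/2 \rfloor$ together with Lemma \ref{compl}, and the inductive hypothesis disposes of it.

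The main obstacle will be the residual subsums: $\sigma(2) = n$ with $\sigma(1) > \sigma(4)$, $\sigma(3) = -n$, and $\sigma(5) = \pm n$. In each such case the plan is to extract the entry $\pm n$ from its window position and rewrite the result as an element of some quotient of $D_{n-1}$ (or of the complementary coset $B_{n-1} \setminus D_{n-1}$, handled via Lemma \ref{compl}), carefully tracking the shifts in $\inv, \nsp, \oinv,$ and $\onsp$. The $\oinv$ and $\onsp$ shifts are the delicate point, since deletion of an entry at position $a$ changes the parity of all window positions to its right, so the bookkeeping of odd pairs on the tail is nontrivial. The target is to express each surviving subsum as a monomial $(-1)^{?} x^{?}$ times the signed generating function of $(-1)^{\ell} x^{L}$ on one of $D_{n-1}^{\emptyset}$, $D_{n-1}^{\{1\}}$, $D_{n-1}^{\{0,1\}}$, or $D_{n-1}^{\{0,1,3\}}$ (or its $B_{n-1}\setminus D_{n-1}$ analogue), all of which are known via Theorem \ref{sq}, Theorem \ref{maxquod}, Theorem \ref{pro01}, or the inductive hypothesis. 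Combining all the pieces and simplifying using the identities $(1-x^2)(1+x^2) = 1-x^4$ and $\bigl(1 + (-1)^{n-1} x^{\lfloor n/2 \rfloor}\bigr)^2 \prod_{j=4}^{n-1} (\cdots)^2 = \prod_{j=4}^{n} (\cdots)^2$ should then close the induction. The hardest step, I expect, will be the $\sigma(5) = \pm n$ case, for which no direct length-$3$ involution is available and the $L$-shift calculation must be split into several subcases depending on the relative order of $\sigma(3)$ and $\sigma(6)$ and on parity considerations for positions to the right of $5$.
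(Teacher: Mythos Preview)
The statement you are attempting to prove is a \emph{conjecture} in the paper (Conjecture~\ref{con01ib}, equivalently Conjecture~\ref{con01i}); the paper offers no proof, only numerical verification for $n\le 8$. So there is no ``paper's own proof'' to compare against.

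Your plan follows the template of Theorems~\ref{maxquod} and~\ref{pro01} and the preliminary reductions (shifting to $i=3$, the involutions killing $\sigma(4)=n$ and the portion of $\sigma(2)=n$ with $\sigma(1)<\sigma(4)$, and the treatment of $\sigma(n)=\pm n$) are correct as stated. However, the residual cases you flag are not merely bookkeeping---they are the genuine obstruction, and your outline does not overcome them. For $\sigma(5)=\pm n$, deleting position~$5$ flips the parity of every position $\ge 6$, so $\oinv$ and $\onsp$ of the reduced word are not expressible as $\oinv(\sigma)$, $\onsp(\sigma)$ plus a shift depending only on $n$; the discrepancy depends on the relative order of entries on both sides of position~$5$, and there is no evident sign-reversing pairing to absorb it. For the leftover piece $\{\sigma(2)=n,\ \sigma(1)>\sigma(4)\}$, the condition $\sigma(1)>\sigma(4)>\sigma(3)$ after removing $n$ does not cut out any quotient of $D_{n-1}$ or $(B_{n-1}\setminus D_{n-1})$ whose signed generating function is known, so the strategy of reducing to $D_{n-1}^{\emptyset}$, $D_{n-1}^{\{1\}}$, $D_{n-1}^{\{0,1\}}$, or $D_{n-1}^{\{0,1,3\}}$ breaks down here.

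In short, your proposal is a reasonable first attack, and the parts you claim to dispatch are indeed dispatched, but the ``hardest step'' you anticipate is exactly where the paper's methods run out---which is presumably why the authors recorded this as an open conjecture rather than a theorem.
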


We feel that the presence of the factor 
$\prod_{j=4}^n (1+(-1)^{j-1} x^{\left\lfloor\frac{j}{2} \right\rfloor})^2$ in Conjectures 
\ref{con0i} and \ref{con01i} is not a coincidence. More generally, we feel that 
the following holds.

\begin{con}
Let $n \in {\mathbb P}$, $n \geq 3$, and $J \subseteq [0,n-1]$. Let $J_0, J_1, \ldots , J_s$
be the connected components of $J$ indexed as described before Theorem \ref{Bquot}. Then
there exists a polynomial $M_J (x) \in {\mathbb Z}[x]$ such that
\[ D_{n}^{J} (x) =
M_J (x) \; \prod_{j=2m+2}^{n} (1+(-1)^{j-1} x^{\left\lfloor \frac{j}{2} \right\rfloor })^2 ,
\]
where $m:=\sum_{i=0}^s \left\lfloor \frac{|J_{i}|+1}{2}\right\rfloor$. Furthermore,
$M_J(x)$ only depends on $( |J_0|, |J_1|, \ldots , |J_s| )$ and is a symmetric function of
$ |J_1|, \ldots , |J_s| $.
\end{con}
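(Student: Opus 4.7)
The plan is to proceed by induction on $n$, combining the recurrence technique used in the proofs of Theorems \ref{sq} and \ref{maxquod} with the shifting propositions of \S 3. First I would establish the factorization itself. Fix $J$, and for $n$ such that no element of $J$ lies in $\{n-3,n-2,n-1\}$, Lemma \ref{zerod} eliminates the contribution of $\sigma$ with $|\sigma^{-1}(n)|=a$ for every $a\in[2,n-1]$ such that $[a-2,a+1]\cap J=\emptyset$. Provided that no other positions require separate treatment, the argument from the proof of Theorem \ref{sq}, together with \eqref{ha}, \eqref{che} and Lemma \ref{compl}, yields the stable recurrence
\[ D_{n}^{J}(x) = \left(1+(-1)^{n-1}x^{\lfloor n/2 \rfloor}\right)^{2} D_{n-1}^{J}(x). \]
Iterating this down to a threshold value $N$ depending on $J$ and setting $M_{J}(x):=D_{N}^{J}(x)$ produces the claimed factorization; showing that $M_{J}(x)\in\mathbb{Z}[x]$ reduces to a base case computation in the spirit of Theorems \ref{maxquod} and \ref{pro01}, where the contributions from positions $a$ near the support of $J$ must be analyzed and expressed via $D_{N-1}^{J'}(x)$ for suitable auxiliary quotients $J'$ to which the inductive hypothesis applies.

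Second, to show that $M_{J}(x)$ depends only on the tuple $(|J_{0}|,|J_{1}|,\ldots,|J_{s}|)$, I would iterate Proposition \ref{shd} and its $0$-containing counterpart: any two $J, J'$ with the same ordered tuple of component sizes are connected by a finite sequence of shifts, each of which preserves $D_{n}^{J}(x)$, and therefore also $M_{J}(x)$.

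The hard part will be the symmetry of $M_{J}(x)$ in $|J_{1}|,\ldots,|J_{s}|$. The shifting propositions preserve the left-to-right order of the connected components, so they cannot by themselves swap two adjacent sizes. My proposed route is to reduce to a single adjacent transposition of two components $J_{i}, J_{i+1}$ of sizes $a \neq b$, and to construct an explicit involution on $D_{n}^{J}$ that interchanges the roles of the two blocks of positions while preserving both $\ell$ and $L$. A natural starting point is to adapt the double transposition $\sigma \mapsto \sigma(a-1,a+1)(-a-1,-a+1)$ appearing in the proof of Lemma \ref{zerod} so that it acts simultaneously across the two blocks. Alternatively, one might look for a recursion for $M_{J}(x)$ that is manifestly symmetric in the non-initial component sizes. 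I expect this symmetry to be the main obstacle, since none of the tools developed so far give any indication of why it should hold.
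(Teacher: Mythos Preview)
The statement you are attempting to prove is presented in the paper as a \emph{conjecture}, not a theorem. Section~6 is entitled ``Open problems'', and immediately after the statement the authors write only ``This conjecture has been verified for $n\leq 8$.'' There is no proof in the paper to compare your proposal against; the authors do not claim to have one.

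That said, your proposal does not constitute a proof either, and the gap is not only in the symmetry step you flag at the end. Your derivation of the ``stable recurrence''
\[
D_{n}^{J}(x)=\bigl(1+(-1)^{n-1}x^{\lfloor n/2\rfloor}\bigr)^{2}\,D_{n-1}^{J}(x)
\]
rests on the proviso ``provided that no other positions require separate treatment'', but for any nonempty $J$ other positions \emph{do} require separate treatment. Lemma~\ref{zerod} kills the contribution from $\sigma(a)=\pm n$ only when $[a-2,a+1]\cap J=\emptyset$; every position $a$ within distance $2$ of a component of $J$ fails this hypothesis, regardless of how large $n$ is. Even after using Proposition~\ref{shd} to push all components as far left as possible, those nearby positions persist, and the proofs of Theorems~\ref{maxquod} and~\ref{pro01} show that handling them is delicate already when $J$ is a one- or two-element set. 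You have given no argument that the contributions from these positions either vanish or assemble into exactly the missing factor $(1+(-1)^{n-1}x^{\lfloor n/2\rfloor})^{2}$ times $D_{n-1}^{J}(x)$; this is the heart of the conjecture, not a routine base case. The dependence of $M_J$ only on the component sizes would indeed follow from the shifting propositions once the factorization is known, but the symmetry in $|J_1|,\ldots,|J_s|$ remains, as you say, out of reach of the tools in the paper.
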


This conjecture has been verified for $n\leq 8$.

\end{document}